\newtheorem{theorem}{Theorem}[section]
\newtheorem{corollary}[theorem]{Corollary}
\newtheorem{proposition}[theorem]{Proposition}
\newtheorem{lemma}[theorem]{Lemma}
\theoremstyle{definition}
\begin{document}

\title[Transport equations with fractal noise]{Transport equations with fractal noise -\\existence, uniqueness and regularity\\of the solution}


\author{ Elena Issoglio }

\address{E. Issoglio: Institut f\"ur Stochastik, Friedrich-Schiller Universit\"at, Jena, Germany; elena.issoglio@uni-jena.de}
\address{School of Mathematics,  The University of Manchester, Manchester, United Kingdom; elena.issoglio@manchester.co.uk}


\subjclass[2000]{Primary 35K20; Secondary 35R60, 60H15, 60G22}

\begin{abstract}
The main result of the present paper is a statement on existence, uniqueness and regularity for mild solutions to a parabolic transport diffusion type equation that involves a non-smooth coefficient. We investigate related Cauchy problems on bounded smooth domains with Dirichlet boundary conditions by means of semigroup theory and fixed point arguments. Main ingredients are the definition of a product of a function and a (not too irregular) distribution as well as a corresponding norm estimate. As an application, transport stochastic partial differential equations driven by fractional Brownian noises are considered in the pathwise sense.
\end{abstract}

\keywords{Stochastic partial differential equations; Transport equation; Non-smooth coefficients; Fractional Brownian noise}




\maketitle


\newcommand{\supp}{\operatorname{supp}}

\section{Introduction}

We consider the following transport equation on a domain $D\subset\mathbb{R}^d$ with initial and Dirichlet boundary conditions:
\begin{equation}\label{SPDE iniziale Dirichlet}
\left\{\begin{array}{lcr}
\frac{\partial u}{\partial t}(t,x)=  \Delta u (t,x) + \langle \nabla u, \nabla Z\rangle (t,x) ,&\quad& t   \in (0,T], x\in D\\
u(t,x)=0, &&  t\in (0,T], x\in \partial D\\
u(0,x)=u_0(x), && x\in D
\end{array}\right.
\end{equation}
where $D$ is a bounded open set of $\mathbb{R}^d$ with $C^\infty$ boundary, $u_0$ is a given function in some appropriate space, $Z$ is a given non-differentiable function on $\mathbb{R}^d$ and the derivative is taken in the distributional sense.  The gradient $\nabla$ as well as the Laplacian $\Delta$ refer to the space variables. The precise definition of the product $\langle\nabla u, \nabla Z\rangle (t,x)$ will be given below, and it is set by use of the Fourier transform.

The aim of this paper is to give a meaning to the formal problem (\ref{SPDE iniziale Dirichlet}) and to investigate existence, uniqueness and regularity of corresponding solutions. \\
We rewrite problem  (\ref{SPDE iniziale Dirichlet}) in the abstract Cauchy setting, namely we interpret all mappings as functions  of time $t$ taking values  in some suitable function space $X$ (real function space on $\mathbb{R}^d$, our choice will be specified later). Set $\underbar {\textit{u}} :[0,T]\to X, \, t\mapsto \underbar {\textit{u}}(t)\in X$ and $(\underbar {\textit{u}}(t))(\cdot):=u(t,\cdot)$. The Dirichlet initial value problem becomes the following abstract Cauchy problem
\begin{equation}\label{SPDE Cauchy prb}
\left\{\begin{array}{lcr}
\frac{d}{dt}\underbar {\textit{u}}=  \Delta_D \underbar {\textit{u}} + \langle \nabla \underbar {\textit{u}}, \nabla Z\rangle, &\quad&     t\in(0,T]\\
\underbar {\textit{u}}=u_0, &&  t=0
\end{array}\right.
\end{equation}
where $\Delta_D$ stands for the Dirichlet-Laplace operator.\\
Note that we need some care to give an appropriate definition for the product term $\langle \nabla \underbar {\textit{u}}, \nabla Z\rangle$: In the cases we consider the components of  $\nabla Z$ will be distributions.  This is not covered by results in the standard literature for partial differential equations (PDEs) (see for instance \cite{e1, l2}). We use a priori estimates on this product which lead to optimal regularity results. To our knowledge, this has not been considered anywhere else.

There is a rich literature regarding stochastic PDEs (SPDEs) (see for instance \cite{c1, d1, h2} and references therein). In these references the noise is assumed to be of Brownian (or semimartingale) type.\\
There are also results on SPDEs involving fractional Brownian (or general non-semimartingale) type noises (see for instance \cite{d3, g1, h1, h3,  m1, t1} ) but it seems that there are few results on transport diffusion equations with random non-smooth drift of the form (\ref{SPDE Cauchy prb}).

To our knowledge, the only study regarding this problem is  due to Russo and Trutnau \cite{r2} where they investigate a stochastic equation like (\ref{stochastic PDE}) (which is the stochastic analog of (\ref{SPDE iniziale Dirichlet})) but  in  space dimension one. The authors proceed by freezing the realization of the noise for each $\omega$ and overcome the problem of defining the product between a function and a distribution by means of a probabilistic representation: They express the parabolic PDE probabilistically through the associated diffusion which is the solution of a stochastic differential equation with generalized drift.

In the present paper Fourier analysis is used to define \emph{pointwise} products that work for any space dimension (see Proposition \ref{proposition maggiorazione prodotto puntuale per uZ}). The product itself will be a distribution.\\
We proceed as follows: In Section \ref{section preliminaries}, after having introduced the framework and the notion of (mild) solution, we define an integral operator $I$.
The product together with the action of the semigroup and an integration with respect to time will define the integral operator.\\
In Section \ref{section main result} we first collect some useful a priori estimates and bounds,  then we state  the key Theorem \ref{teorema per I in spazi di holder (rho)} dealing with the mapping property of the integral operator in the spaces $C^\gamma([0,T]; \tilde H^{1+\delta}(D))$ and finally we state the main result in Theorem \ref{teo esistenza e unicita globale}. By a contraction argument and under suitable conditions on the papameters $\gamma,\delta>0$, on the noise and on the initial condition we find a unique (mild) solution for (\ref{SPDE iniziale Dirichlet}) in the above-mentioned space. Of interest is the fact that the solution is actually a function, even though we make use of fractional Sobolev spaces of negative index (spaces of distributions) while proving  the desired result.\\
In Section \ref{section applications} we conclude the paper  presenting some applications to stochastic PDEs. We are namely able to solve a class of SPDEs where the noise is, for instance, a temporally homogeneous fractional Brownian field. Moreover combining it with a results of Hinz and Z\"ahle \cite{h1} we can treat the more general (stochastic) transport equation of the form
\begin{equation*}
\left\{\begin{array}{lr}
\frac{\partial u}{\partial t}(t,x)=  \Delta u (t,x) + \langle \nabla u, \nabla Z\rangle (t,x) + \langle F, \frac{\partial}{\partial t}\nabla V \rangle (t,x),& t   \in (0,T], x\in D\\
u(t,x)=0, &  t\in (0,T], x\in \partial D\\
u(0,x)=u_0(x), & x\in D
\end{array}\right.
\end{equation*}
where $F$ is a given vector and $V=V(t,x)$ is a given non-differentiable function.
Throughout the whole paper $c$ denotes a finite positive constant whose exact value is not important and may change from line to line.

\section{Preliminaries}\label{section preliminaries}

\subsection{Framework}\label{subsection framework}
Recall the definition of fractional Sobolev spaces (Bessel potential spaces) on $\mathbb{R}^d$. For $\alpha\in\mathbb{R}$ and $1<p<\infty$ set
\[
H^\alpha_p(\mathbb R^d; \mathbb C):=\left\{ f\in \mathcal{S}'(\mathbb{R}^d; \mathbb C) : ((1+\vert\xi\vert^2)^{\frac{\alpha}{2}} \hat f)^\vee \in L_p(\mathbb{R}^d,\mathbb C) \right\},
\]
equipped with the norm $ \|f\vert H_p^\alpha(\mathbb{R}^d;\mathbb C)\|=\|((1+\vert\xi\vert^2)^{\frac{\alpha}{2}} \hat f)^\vee\vert L_p(\mathbb{R}^d;\mathbb C)\|$, where $\hat f$ stands for the Fourier transform of $f$ on $\mathbb{R}^d$ and $(\cdot)^\vee$ denotes the inverse Fourier transform. We are interested only in real valued distributions (and functions) so we follow \cite{r1} and define
$\mathcal{S}'(\mathbb{R}^d;\mathbb{R}):=\{f\in\mathcal{S}'(\mathbb{R}^d;\mathbb C): \bar f=f \}$ where $\bar f$ is defined by $\bar f(\phi)=f(\bar \phi)$ for all $\phi\in\mathcal{S}(\mathbb{R}^d,\mathbb C)$. For $1<p<\infty$ and $\alpha\in \mathbb{R}$ we define $H_p^\alpha(\mathbb{R}^d;\mathbb{R}):= H_p^\alpha(\mathbb{R}^d;\mathbb C)\cap \mathcal S'(\mathbb{R}^d;\mathbb{R})$.\\
For simplicity of notation we omit the writing of the codomain when it is $\mathbb{R}$.\\
The corresponding Sobolev spaces on $D$, suitable for our purposes, are defined for all $\alpha>-\frac{1}{2}$ as
\[
\tilde H^\alpha_p(D):=\left\{ f\in H^\alpha_p(\mathbb{R}^d): \supp(f)\subset \bar D \right\}
\]
equipped with the norm  in $H^\alpha_p(\mathbb{R}^d)$.  Observe that if $\alpha=0$ then the space $\tilde H^0_p(D)$ is simply $L_p(D)$. Such spaces are embedded in each other in the following way: For all $\alpha>\beta$, $ H_p^\alpha (\mathbb{R}^d)\subset H_p^\beta(\mathbb{R}^d)$. An analogous relation holds for the spaces on domain $D$ for all $\alpha>\beta>-\frac{1}{2}$. \\
We omit the subscript index  $p$ if $p=2$. In this case the norm in $H^\alpha(\mathbb{R}^d)$ is denoted by  $\|\cdot\|_\alpha$. Moreover when we have a vector (like $\nabla Z$) we write $\nabla Z \in  H_p^{\alpha}(\mathbb{R}^d)$ (and similarly for spaces on $D$) to intend that every component of the vector $\nabla Z$ belongs to such space. The norm of a $d$-dimensional vector in the space $( H_p^{\alpha}(\mathbb{R}^d))^d$ is defined as the square root of the sum of the squared norm of each component in $ H_p^{\alpha}(\mathbb{R}^d)$. For simplicity we will indicate it with the same notation.

Consider now the Dirichlet Laplacian $\Delta_D$ as the infinitesimal generator of the Dirichlet heat semigroup acting on $L_2(D)$ (see e.~g.~\cite{v1} Section 4.1, \cite{e1} Section 7.4.3). Throughout the whole paper we will indicate it with $\Delta_D=-A$. More precisely $-A$ generates a compact $\mathscr{C}_0$ semigroup of contractions $(P_t)_{t\geq 0}$ in $L_2(D)$ (see \cite{v1}, Theorem 7.2.5).  The semigroup is of negative type and symmetric. Moreover it is known that if the semigroup is contractive and symmetric it is also analytic (see \cite{d2}, Theorem 1.4.1  or \cite{s1}, Chapter III), thus one can define fractional powers of $A$ of any order (see for instance \cite{p1}).\\
It can be shown (see \cite{t4} equations (27.50) and (27.51) or \cite{t3} Section 4.9.2) that for all $\gamma,\alpha\in\mathbb{R}$ such that $-\frac{1}{2}<\gamma,\gamma-\alpha<\frac{3}{2}$ the fractional power $  A^{\frac{\alpha}{2}} $ maps isomorphically $ \tilde H^{\gamma}(D)$ onto $  \tilde H^{\gamma-\alpha}(D)$, hence there exist $c_1>0$ and $c_2>0$ such that for all $f\in \tilde H^\gamma(D)$
\begin{equation}\label{bound for the isomorphic spaces H}
\left\|A^{\frac{\alpha}{2}} f \right\|_{\gamma-\alpha}\leq c_1\left\|f\right\|_{\gamma} \leq c_2 \left\|A^{\frac{\alpha}{2}} f \right\|_{\gamma-\alpha}.
\end{equation}
Furthermore one can prove that  $\mathcal{D}(A^{\frac{\alpha}{2}})=\tilde H^\alpha(D)$ for all $0<\alpha<\frac{3}{2}, \alpha\neq \frac{1}{2}$ (for details see \cite{t3}).\\
We recall now a standard result on semigroups, for a proof we refer to \cite{p1}  Theorem II.6.13 or \cite{v1} Theorem 7.7.2.
\begin{theorem}\label{theor pazy}
 Let $-\mathcal{A}$ be the infinitesimal generator of an analytic semigroup $T_t$ on a Banach space $(X,\|\cdot \|_X)$.  If for each $t\geq 0$ holds $\|T_t\|\leq Me^{-\omega t}$ with $M\geq 1$ and $\omega >0$ then
\begin{itemize}
 \item [(a)] $T_t:X\to \mathcal{D}(\mathcal{A}^\alpha)$  for every $t>0, \alpha\geq 0$;
 \item [(b)] for every $\alpha\geq0$ and for every $x\in \mathcal{D} (\mathcal{A}^\alpha)$,  $ T_t\mathcal{A}^\alpha x= \mathcal A^\alpha T_t x$;
 \item [(c)] for every $t>0$ and for every $\alpha\geq0$ the operator $\mathcal{A}^\alpha T_t$ is bounded and linear and there exist constants $M_\alpha$ (which depends only on $\alpha$) and $\theta\in(0,\omega)$ such that
\[
\|\mathcal{A}^\alpha T_t  \|_{\mathscr L(X)} \leq M_\alpha e^{-\theta t} t^{-\alpha};
\]
 \item [(d)] for each $0<\alpha\leq1$ there exists $C_\alpha>0$ such that $\forall t>0$ and for each $x\in \mathcal D(\mathcal{A}^\alpha)$ we have
\[
\|T_t x-x\|_X\leq C_\alpha t^\alpha \|\mathcal{A}^\alpha x\|_X.
\]
\end{itemize}
\end{theorem}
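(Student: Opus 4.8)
The plan is to obtain all four assertions from the standard construction of the fractional powers of $\mathcal A$, using the hypotheses $M\geq 1$, $\omega>0$ only through two consequences: $0$ lies in the resolvent set of $-\mathcal A$, so $\mathcal A$ is boundedly invertible, and $\|T_t\|$ decays exponentially. First I would set up the fractional powers: since $\|T_t\|\leq Me^{-\omega t}$ with $\omega>0$, the operator
\[
\mathcal A^{-\alpha}:=\frac{1}{\Gamma(\alpha)}\int_0^\infty t^{\alpha-1}T_t\,dt,\qquad\alpha>0,
\]
is well defined as an absolutely convergent $\mathscr L(X)$-valued integral, is injective, and satisfies $\mathcal A^{-\alpha}\mathcal A^{-\beta}=\mathcal A^{-(\alpha+\beta)}$. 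One then sets $\mathcal A^\alpha:=(\mathcal A^{-\alpha})^{-1}$ with $\mathcal D(\mathcal A^\alpha):=\operatorname{Range}(\mathcal A^{-\alpha})$, and for $\alpha=n+\beta$ with $n\in\mathbb N$, $\beta\in(0,1)$ one has the consistency $\mathcal A^\alpha=\mathcal A^n\mathcal A^\beta$ and the nesting $\mathcal D(\mathcal A^\gamma)\subset\mathcal D(\mathcal A^\delta)$ for $\gamma\geq\delta\geq 0$, with $\mathcal A^\delta x=\mathcal A^{\delta-\gamma}(\mathcal A^\gamma x)$ there.

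Next I would prove (c), which is the engine for everything else. For integer powers, analyticity of $T_t$ gives $T_tx\in\mathcal D(\mathcal A)$ for $t>0$ and $\|\mathcal A T_t\|_{\mathscr L(X)}\leq c/t$ for $t\in(0,1]$; iterating via $\mathcal A^nT_t=(\mathcal A T_{t/n})^n$ yields $\|\mathcal A^nT_t\|_{\mathscr L(X)}\leq c_n t^{-n}$ for small $t$, and splitting $\mathcal A^nT_t=(\mathcal A^nT_{t/2})T_{t/2}$ together with $\|T_{t/2}\|\leq Me^{-\omega t/2}$ upgrades this to $\|\mathcal A^nT_t\|_{\mathscr L(X)}\leq M_n e^{-\theta t}t^{-n}$ for any fixed $\theta\in(0,\omega)$. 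For fractional $\alpha\in(0,1)$ I would invoke the moment inequality $\|\mathcal A^\alpha y\|_X\leq c\,\|y\|_X^{1-\alpha}\|\mathcal A y\|_X^{\alpha}$ for $y\in\mathcal D(\mathcal A)$ — proved by splitting the Balakrishnan representation $\mathcal A^\alpha y=\frac{\sin\pi\alpha}{\pi}\int_0^\infty\lambda^{\alpha-1}(\lambda I+\mathcal A)^{-1}\mathcal A y\,d\lambda$ at $\lambda=\|\mathcal A y\|_X/\|y\|_X$ — applied to $y=T_tx$:
\[
\|\mathcal A^\alpha T_tx\|_X\leq c\,(Me^{-\omega t})^{1-\alpha}(M_1e^{-\theta t}t^{-1})^{\alpha}\|x\|_X=M_\alpha e^{-\theta' t}t^{-\alpha}\|x\|_X
\]
with $\theta'=(1-\alpha)\omega+\alpha\theta\in(0,\omega)$. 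General $\alpha=n+\beta$ follows from $\mathcal A^\alpha T_t=(\mathcal A^\beta T_{t/2})(\mathcal A^n T_{t/2})$.

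Then (a) and (b) are short consequences: for $t>0$ analyticity gives $T_tx\in\mathcal D(\mathcal A^n)\subset\mathcal D(\mathcal A^\alpha)$ for any $\alpha\leq n$, which is (a); and since $T_t$ commutes with the integral defining $\mathcal A^{-\alpha}$, it commutes with $\mathcal A^{-\alpha}$, hence with $\mathcal A^\alpha$ on $\mathcal D(\mathcal A^\alpha)$, which is (b). Finally for (d), take $0<\alpha\leq 1$ and first $x\in\mathcal D(\mathcal A)$; from $\frac{d}{ds}T_sx=-T_s\mathcal A x$ and (b) one gets $T_tx-x=-\int_0^t T_s\mathcal A x\,ds=-\int_0^t\mathcal A^{1-\alpha}T_s\,\mathcal A^\alpha x\,ds$, so by (c) with exponent $1-\alpha$,
\[
\|T_tx-x\|_X\leq\int_0^t\|\mathcal A^{1-\alpha}T_s\|_{\mathscr L(X)}\,ds\;\|\mathcal A^\alpha x\|_X\leq M_{1-\alpha}\Big(\int_0^t s^{\alpha-1}\,ds\Big)\|\mathcal A^\alpha x\|_X=C_\alpha t^\alpha\|\mathcal A^\alpha x\|_X;
\]
since $\mathcal D(\mathcal A)$ is dense in $\mathcal D(\mathcal A^\alpha)$ for the graph norm $\|\cdot\|_X+\|\mathcal A^\alpha\cdot\|_X$ (approximate by $\lambda(\lambda I+\mathcal A)^{-1}x$ as $\lambda\to\infty$) and both sides are continuous in this norm, the estimate extends to all $x\in\mathcal D(\mathcal A^\alpha)$.

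The main obstacle is the fractional estimate (c): justifying the moment inequality in a general Banach space (rather than a Hilbert space, where the spectral theorem would make it trivial) and then reconciling the short-time singularity $t^{-\alpha}$ with the exponential factor while keeping the rate strictly below $\omega$. Everything else is bookkeeping with absolutely convergent operator-valued integrals, the commutation relations, and the density argument in (d); this is of course classical, and a complete treatment can be found in the references cited above.
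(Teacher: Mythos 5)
Your outline is correct, and it is essentially the classical argument: the paper itself gives no proof of this theorem but refers to Pazy (Theorem 2.6.13) and Vrabie (Theorem 7.7.2), and your construction of $\mathcal{A}^{-\alpha}$ via the Gamma-integral, the moment inequality from the Balakrishnan representation to get (c), and the integral-plus-density argument for (d) is exactly the proof found in those references. No gaps worth flagging; the only point to state explicitly is the resolvent bound $\|(\lambda I+\mathcal{A})^{-1}\|\leq M/(\lambda+\omega)$ (from $(\lambda I+\mathcal{A})^{-1}=\int_0^\infty e^{-\lambda t}T_t\,dt$), which underlies both the Balakrishnan split and the approximants $\lambda(\lambda I+\mathcal{A})^{-1}x$ in your density step.
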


As a consequence of this theorem and of relation (\ref{bound for the isomorphic spaces H}) we get the following result.

\begin{corollary}\label{propr per Pt in sobolev frazionario}
Let $(P_t)_{t\geq0}$ be the  Dirichlet heat semigroup on $L_2(D)$. Then for all positive $t$ and for any $-\frac{1}{2}<\rho,\gamma,\rho+\gamma<\frac{3}{2}$we have
\begin{equation}
 P_t:\tilde H^\gamma (D)\to \tilde H^{\rho+\gamma}(D).
\end{equation}
In particular if $f\in  \tilde H^\gamma (D)$ then $\supp (P_tf)\subset \bar D$.
\end{corollary}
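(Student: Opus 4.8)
The plan is to derive Corollary \ref{propr per Pt in sobolev frazionario} as a direct consequence of Theorem \ref{theor pazy} together with the isomorphism property (\ref{bound for the isomorphic spaces H}). The key observation is that the Dirichlet heat semigroup $(P_t)_{t\geq0}$ is exactly the analytic semigroup generated by $-A=\Delta_D$, and it satisfies $\|P_t\|_{\mathscr L(L_2(D))}\leq Me^{-\omega t}$ for some $M\geq 1$ and $\omega>0$ since it is a contraction semigroup of negative type. Thus Theorem \ref{theor pazy} applies with $\mathcal A=A$ and $X=L_2(D)$.

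First I would treat the case $\gamma=0$, i.e. show $P_t:L_2(D)\to\tilde H^{\rho}(D)$ for $0<\rho<\frac{3}{2}$. By part (a) of Theorem \ref{theor pazy} we have $P_t:L_2(D)\to\mathcal D(A^{\alpha})$ for every $\alpha\geq0$; taking $\alpha=\rho/2$ and using the identification $\mathcal D(A^{\rho/2})=\tilde H^\rho(D)$ for $0<\rho<\frac32$, $\rho\neq\frac12$ (stated in the excerpt), this gives the claim away from the exceptional value $\rho=\frac12$. The exceptional value can be handled by the embedding $\tilde H^{\rho'}(D)\subset\tilde H^{1/2}(D)$ for any $\rho'\in(\frac12,\frac32)$: indeed $P_tf\in\tilde H^{\rho'}(D)\subset\tilde H^{1/2}(D)$. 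Next, for general $\gamma$ with $-\frac12<\gamma,\rho+\gamma<\frac32$, I would factor $P_t=P_{t/2}P_{t/2}$ and use the isomorphism $A^{\gamma/2}:\tilde H^\gamma(D)\to L_2(D)$ given by (\ref{bound for the isomorphic spaces H}) (valid since $-\frac12<\gamma<\frac32$): writing $f\in\tilde H^\gamma(D)$, the element $A^{\gamma/2}f$ lies in $L_2(D)=\tilde H^0(D)$, then $P_{t/2}A^{\gamma/2}f\in\tilde H^{\rho+\gamma}(D)$ by the $\gamma=0$ case (applied with exponent $\rho+\gamma$, which is in the admissible range), and finally $A^{-\gamma/2}$ maps $\tilde H^{\rho+\gamma}(D)$ isomorphically onto $\tilde H^{\rho+2\gamma}(D)$... — cleaner is to use part (b), the commutation $P_t A^\alpha x=A^\alpha P_t x$, so that $P_t f = P_t A^{-\gamma/2}(A^{\gamma/2}f) = A^{-\gamma/2}P_t(A^{\gamma/2}f)$ with $A^{\gamma/2}f\in L_2(D)$, hence $P_t(A^{\gamma/2}f)\in\tilde H^{\rho+\gamma}(D)$ and applying $A^{-\gamma/2}$ brings it to $\tilde H^{\rho+2\gamma}(D)\hookrightarrow$ ... . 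To avoid this mismatch I would instead argue symmetrically: $P_tf=A^{-(\rho+\gamma)/2}A^{(\rho+\gamma)/2}P_tf$; since $A^{(\rho+\gamma)/2}P_t = A^{(\rho+\gamma)/2}P_{t/2}\cdot P_{t/2}$ is bounded on $L_2(D)$ by part (c) and $\tilde H^\gamma(D)\hookrightarrow L_2(D)$ when $\gamma\geq 0$ (resp. a preliminary $A^{-\gamma/2}$ when $\gamma<0$), one gets $A^{(\rho+\gamma)/2}P_tf\in L_2(D)$, and then $A^{-(\rho+\gamma)/2}$ maps $L_2(D)$ isomorphically onto $\tilde H^{\rho+\gamma}(D)$ by (\ref{bound for the isomorphic spaces H}) since $-\frac12<\rho+\gamma<\frac32$.

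For the final assertion that $\supp(P_tf)\subset\bar D$, I would note this is automatic: by the definition of $\tilde H^{\rho+\gamma}(D)$ given in the excerpt, membership in $\tilde H^{\rho+\gamma}(D)$ already \emph{includes} the support condition $\supp(P_tf)\subset\bar D$, so it is simply a restatement of the mapping property (valid since $\rho+\gamma>-\frac12$). The main obstacle — really the only subtle point — is bookkeeping the admissible ranges of the fractional-power exponents so that every application of (\ref{bound for the isomorphic spaces H}) and of the identification $\mathcal D(A^{\alpha/2})=\tilde H^\alpha(D)$ stays within the stated constraints $-\frac12<\gamma,\gamma-\alpha<\frac32$ and $0<\alpha<\frac32$; in particular one must either route around the exceptional value $\alpha=\frac12$ via Sobolev embeddings or invoke the finer description of $\mathcal D(A^{1/4})$ from \cite{t3}. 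Everything else is a routine composition of bounded/isomorphic maps together with the smoothing estimate in part (c) of Theorem \ref{theor pazy}.
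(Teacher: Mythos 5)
Your argument is correct and is essentially the paper's own proof: reduce to $L_2(D)$ via the isomorphism (\ref{bound for the isomorphic spaces H}), commute the fractional power with $P_t$, apply the analytic-semigroup smoothing of Theorem \ref{theor pazy} together with the identification $\mathcal{D}(A^{\alpha/2})=\tilde H^{\alpha}(D)$, and route around the exceptional exponent $\tfrac12$ by the Sobolev embedding, the final support assertion being just the definition of $\tilde H^{\rho+\gamma}(D)$. The ``mismatch'' you detour around is only a self-inflicted bookkeeping slip --- in the factorization $P_tf=A^{-\gamma/2}P_t(A^{\gamma/2}f)$ one applies the $\gamma=0$ case with smoothing exponent $\rho$ (not $\rho+\gamma$) before applying $A^{-\gamma/2}$, which lands exactly in $\tilde H^{\rho+\gamma}(D)$ and is precisely what the paper does.
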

\begin{proof}
Consider first the case when $\gamma>0$. Let $f\in \tilde H^{\gamma}(D)$ so by (\ref{bound for the isomorphic spaces H}) we have  $g:=A^{\frac{\gamma}{2}} f\in L_2(D)$. We write $P_tf=P_tA^{-\frac{\gamma}{2}}A^{\frac{\gamma}{2}} f=P_tA^{-\frac{\gamma}{2}} g=A^{-\frac{\gamma}{2}}P_t g$ and by Theorem \ref{theor pazy} (a) we know that $P_t g\in \mathcal{D}(A^\rho)$ for any $\rho\geq 0$. Moreover recall that $\mathcal{D}(A^2\rho)=\tilde H^{\rho}(D)$ for all $0\leq \rho<\frac{3}{2}, \rho\neq \frac{1}{2} $, so for this choice of $\rho$ and  using (\ref{bound for the isomorphic spaces H}) we get $P_tf=A^{-\frac{\gamma}{2}}P_t g \in \tilde H^{\rho+\gamma}(D)$. Observe that this fact is true also if $\rho=\frac{1}{2}$ since $\tilde H^{\rho+\gamma}(D)\subset \tilde H^{\frac{1}{2}+\gamma}(D)$ for all $\rho>\frac{1}{2}$. \\
The case when $\gamma<0$ is proven in the same way, simply write $A^{-\frac{\gamma}{2}}A^{\frac{\gamma}{2}} P_t f$ instead of $P_tA^{-\frac{\gamma}{2}}A^{\frac{\gamma}{2}} f$.
\end{proof}

\subsection{Mild solutions}
A function $\underbar {\textit{u}}$ is a \emph{mild solution} of (\ref{SPDE Cauchy prb}) if it satisfies the following integral equation
\begin{equation}\label{eq mild solution}
 \underbar {\textit{u}}(t)=P_t u_0+\int_0^t P_{t-r} \langle \nabla \underbar {\textit{u}}(r),\nabla Z\rangle \, \mathrm{d} r
\end{equation}
To give a formal meaning to the product $\langle\cdot,\cdot \rangle$ we make use of the so called \emph{paraproduct}, see e.~g.~\cite{r1}. We shortly recall the definition and some useful properties.\\
Suppose we are given $f\in \mathcal{S} ' (\mathbb{R}^d) $. Choose a function $\psi\in \mathcal{S}(\mathbb{R}^d)$ such that $0\leq \psi(x)\leq 1$ for every $x\in \mathbb{R}^d$, $\psi(x)=1$ if $\vert x\vert\leq 1$ and $\psi(x)=0$ if $\vert x\vert\geq \frac{3}{2}$. Then consider the following approximation of $f$
\[
S^j f (x):= \left(\psi\left(\frac{\xi}{2^j}\right)\hat f\right)^\vee (x)
\]
that is in fact the convolution of $f$ with a smoothing function. This approximation is used  to define the product of two distributions $fg$ as follows:
\[
fg:=\lim _{j\to \infty} S^jfS^jg
\]
if the limit exists in $\mathcal{S}'(\mathbb{R}^d)$. The convergence in the case we are interested in is part of the assertion below (see \cite{h1} appendix C.4, \cite{r1} Theorem 4.4.3/1).
\begin{lemma}\label{lemma maggiorazione pointwise multiplication}
 Let $1<p,q<\infty$ and $0<\beta<\delta$ and assume that $q>\max(p, \frac{d}{\delta})$. Then for every $f\in H^{\delta}_p(\mathbb{R}^d)$ and $g\in H^{-\beta}_q(\mathbb{R}^d)$ we have
\begin{equation}\label{eq maggiorazione paraprodotto}
 \| fg\vert { H_p^{-\beta}(\mathbb{R}^d)}\|\leq c \| f\vert {H_p^{\delta}(\mathbb{R}^d)}\|\cdot \| g\vert { H_q^{-\beta}(\mathbb{R}^d)}\|.
\end{equation}
\end{lemma}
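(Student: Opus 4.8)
The plan is to use Bony's paraproduct decomposition. Building dyadic Littlewood--Paley blocks $\Delta_j$ from the smoothing operators $S^j$ above (say $\Delta_j:=S^{j+1}-S^j$), one splits the formal product into the low--high paraproduct, the resonance (diagonal) term, and the high--low paraproduct,
\[
fg \;=\; \sum_{k}\bigl(S^{k-1}f\bigr)\bigl(\Delta_k g\bigr) \;+\; \sum_{j}\bigl(\Delta_j f\bigr)\bigl(\tilde\Delta_j g\bigr) \;+\; \sum_{j}\bigl(\Delta_j f\bigr)\bigl(S^{j-1}g\bigr) \;=:\; \Pi_1(f,g)+\Pi_2(f,g)+\Pi_3(f,g),
\]
where $\tilde\Delta_j:=\Delta_{j-1}+\Delta_j+\Delta_{j+1}$ and the cutoffs are arranged so that the summands of $\Pi_1,\Pi_3$ are Fourier-supported in dyadic annuli. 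I would show that each of the three series converges in $H^{-\beta}_p(\mathbb{R}^d)$ with norm bounded by $c\,\|f\vert H_p^\delta(\mathbb{R}^d)\|\cdot\|g\vert H_q^{-\beta}(\mathbb{R}^d)\|$; since $H^{-\beta}_p(\mathbb{R}^d)\hookrightarrow\mathcal{S}'(\mathbb{R}^d)$, this simultaneously yields the convergence of $S^jfS^jg$ to $fg$ asserted just before the statement, and real-valuedness is preserved throughout.

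The two paraproducts $\Pi_1,\Pi_3$ are the routine terms: their summands lie in annuli, so reassembling the pieces into a function space costs nothing in smoothness. For $\Pi_1$ I estimate the $L_p$-norm of the $k$-th block by H\"older with $\tfrac1p=\tfrac1r+\tfrac1q$ --- here $q>p$ is used, so that $r<\infty$ --- controlling $\|S^{k-1}f\|_{L_r}\leq c\,\|f\|_{L_r}\leq c\,\|f\vert H_p^\delta(\mathbb{R}^d)\|$ via the Sobolev embedding $H^\delta_p(\mathbb{R}^d)\hookrightarrow L_r(\mathbb{R}^d)$, which holds precisely because $\delta>d/q$, i.e. $q>d/\delta$; the factor $\|\Delta_k g\|_{L_q}$ carries the smoothness weight $2^{k\beta}$ coming from $g\in H^{-\beta}_q(\mathbb{R}^d)$, and summing gives $\Pi_1(f,g)\in H^{-\beta}_p(\mathbb{R}^d)$ with the desired bound. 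For $\Pi_3$ one uses $\|S^{j-1}g\|_{L_q}\leq c\,2^{j\beta}\|g\vert H_q^{-\beta}(\mathbb{R}^d)\|$ (geometric summation, using $\beta>0$), pairs it with the $L_p$-norm of the $j$-th block of $f$ (which carries the weight $2^{-j\delta}$) through H\"older with $\tfrac1m=\tfrac1p+\tfrac1q$, and obtains annulus-localized blocks of smoothness $\delta-\beta$ in $L_m$, so that $\Pi_3(f,g)\in H^{\delta-\beta}_m(\mathbb{R}^d)\hookrightarrow H^{\delta-\beta-d/q}_p(\mathbb{R}^d)\hookrightarrow H^{-\beta}_p(\mathbb{R}^d)$, the first (sharp) embedding using once more $\delta>d/q$. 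Since $p\neq2$ in general, these steps must be run through the square-function (Triebel--Lizorkin) characterisation of $H^s_p$ together with the Fefferman--Stein maximal inequality; this bookkeeping is routine and is exactly what is carried out in \cite{r1}, Theorem 4.4.3/1.

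The main obstacle is the resonance term $\Pi_2(f,g)=\sum_j(\Delta_j f)(\tilde\Delta_j g)$: its summands are Fourier-supported only in balls $\{|\xi|\leq c2^j\}$, not in annuli, so to reassemble them into an element of a function space the \emph{effective smoothness must be strictly positive}. H\"older with $\tfrac1m=\tfrac1p+\tfrac1q$ together with the weights $2^{-j\delta}$ for the blocks of $f$ and $2^{j\beta}$ for those of $g$ shows that these ball-localized blocks have effective smoothness $\delta-\beta$ in $L_m$, and this is positive \emph{precisely because $\beta<\delta$} --- this is the classical obstruction to multiplying a function by a distribution, and it is the only place where the hypothesis $\beta<\delta$ is needed. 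Invoking the elementary lemma that a series of functions with Fourier support in balls of radius $c2^j$ and summable $2^{j(\delta-\beta)}$-weighted $L_m$-norms converges in $H^{\delta-\beta}_m(\mathbb{R}^d)$ when $\delta-\beta>0$, followed by the chain $H^{\delta-\beta}_m(\mathbb{R}^d)\hookrightarrow H^{\delta-\beta-d/q}_p(\mathbb{R}^d)\hookrightarrow H^{-\beta}_p(\mathbb{R}^d)$ (again using $\delta>d/q$), puts $\Pi_2(f,g)$ in the target space. Collecting the three estimates --- the worst of the three output smoothnesses being $-\beta$, attained by $\Pi_1$ --- yields the convergence claim and the bound \eqref{eq maggiorazione paraprodotto}.
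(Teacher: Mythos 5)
The paper does not actually prove this lemma: it is quoted verbatim from the literature (Runst--Sickel, Theorem 4.4.3/1, and Hinz--Z\"ahle, Appendix C.4), with the convergence of $S^jfS^jg$ being part of the cited assertion. Your sketch is essentially a reconstruction of the proof behind that citation, so there is nothing to compare it against inside the paper itself; judged on its own terms it is the standard Bony paraproduct argument and the parameter bookkeeping is right. In particular you correctly isolate where each hypothesis enters: $q>p$ makes the H\"older exponent $r$ with $\tfrac1r=\tfrac1p-\tfrac1q$ finite in $\Pi_1$; $q>d/\delta$ gives both the embedding $H^\delta_p\hookrightarrow L_r$ and the final lift $H^{\delta-\beta-d/q}_p\hookrightarrow H^{-\beta}_p$ (note it is this second, monotonicity-in-smoothness embedding that uses $\delta>d/q$, not the Sobolev step as you wrote); $\beta>0$ sums the geometric series for $S^{j-1}g$; and $\beta<\delta$ is exactly what makes the resonance term reassemblable from ball-supported blocks. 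Two small imprecisions, neither fatal: (i) uniform (rather than $\ell^1$-summable) weighted bounds on ball-supported blocks give $\Pi_2\in B^{\delta-\beta}_{m,\infty}$ rather than $H^{\delta-\beta}_m$ directly, but since all your subsequent embeddings hold with strict inequality there is an $\epsilon$ of room to pass through $H^{\delta-\beta-\epsilon}_m$; (ii) for $\Pi_1$, whose output has negative smoothness, the block-by-block $L_p$ estimate genuinely must be replaced by the square-function/maximal-function version (bounding $\sup_k|S^{k-1}f|$ in $L_r$ and keeping the $\ell^2$-weighted square function of $\Delta_kg$ in $L_q$), which you acknowledge and defer to \cite{r1}. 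Given that the paper itself defers the entire proof to \cite{r1}, this level of detail is more than adequate.
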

The following Lemma regarding a \emph{locality-preserving} property will be used while shifting the properties of the product $fg$ from the whole $\mathbb{R}^d$ to the domain $D$. For the proof see \cite{r1} Lemma 4.2.
\begin{lemma}\label{lemma supporto pointwise multiplication}
 If $f,g\in \mathcal{S}' (\mathbb{R}^d)$ and $\supp( f) \subset \bar D$ then also $\supp ( fg) \subset \bar D$.
\end{lemma}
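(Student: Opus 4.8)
The plan is to argue directly from the definition $fg=\lim_{j\to\infty}S^jf\,S^jg$ in $\mathcal S'(\mathbb R^d)$ (the limit being assumed to exist) and to show that this limiting distribution annihilates every test function whose support misses $\bar D$. Recall that for $h\in\mathcal S'(\mathbb R^d)$ one has $\supp(h)\subset\bar D$ precisely when $h(\phi)=0$ for all $\phi\in\mathcal D(\mathbb R^d)$ with $\supp(\phi)\cap\bar D=\emptyset$. So I fix such a $\phi$, set $\varepsilon:=\operatorname{dist}(\supp\phi,\bar D)>0$, and note that since the functions $S^jf$ and $S^jg$ are smooth and of at most polynomial growth, the convergence in $\mathcal S'$ gives
\[
(fg)(\phi)=\lim_{j\to\infty}\int_{\mathbb R^d}(S^jf)(x)\,(S^jg)(x)\,\phi(x)\,\mathrm{d}x .
\]
It therefore suffices to show that $\sup_{x\in\supp\phi}\bigl|(S^jf)(x)\,(S^jg)(x)\bigr|\to0$ as $j\to\infty$.

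Next I would record the elementary facts about the smoothing kernel: writing $S^jh=h*K_j$ with $K_j(x)=2^{jd}K(2^jx)$ and $K:=\psi^\vee\in\mathcal S(\mathbb R^d)$, one has $\partial^\alpha K_j(z)=2^{j(d+|\alpha|)}(\partial^\alpha K)(2^jz)$ and, by the Schwartz decay of $K$ and its derivatives, $|\partial^\alpha K_j(z)|\le c_{N,\alpha}\,2^{j(d+|\alpha|)}\,(2^j|z|)^{-N}$ for every $N\in\mathbb N$. The main point is that, although $S^jf$ is in general not supported in $\bar D$, it decays rapidly in $j$ away from $\bar D$: since $\supp f\subset\bar D$ is compact, $f$ extends to a continuous functional on $C^\infty$ of some finite order $m$, hence, using $(S^jf)(x)=\langle f,K_j(x-\cdot)\rangle$ and $|x-y|\ge\varepsilon$ for $x\in\supp\phi$ and $y$ in a small fixed neighbourhood of $\bar D$,
\[
|(S^jf)(x)|\le c\sum_{|\alpha|\le m}\sup_{y}\bigl|\partial^\alpha K_j(x-y)\bigr|\le c_N\,\varepsilon^{-N}\,2^{j(d+m-N)}\qquad\text{for every }N.
\]
In the opposite direction, a crude estimate of the same kind (now using only that $g\in\mathcal S'$ has finite order) bounds $S^jg$ on the compact set $\supp\phi$ by a quantity that grows at most like $2^{\kappa j}$ for some fixed $\kappa$. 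Multiplying the two bounds and choosing $N>d+m+\kappa$ yields $\sup_{x\in\supp\phi}|(S^jf)(x)(S^jg)(x)|\le c_N\,2^{j(d+m+\kappa-N)}\to0$, and since $\bigl|\int(S^jf)(S^jg)\phi\,\mathrm{d}x\bigr|$ is controlled by this supremum times a constant depending only on $\phi$, we conclude $(fg)(\phi)=0$, which is the claim.

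The one delicate issue, and the step I would be careful about, is precisely the observation just used: the smoothing operators $S^j$ do \emph{not} preserve supports, so one cannot argue on $\supp(S^jf)$ directly. Instead one must quantify the rate at which $S^jf$ vanishes off $\bar D$ and verify that it overpowers the (at most exponential in $j$) growth of $S^jg$ on the fixed compact set $\supp\phi$. Everything else is routine — the scaling identities for $K_j$, the finite-order continuity estimates for compactly supported, respectively tempered, distributions applied to translated rescaled Schwartz functions, and the trivial bound on the integral by a supremum.
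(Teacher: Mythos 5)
Your proof is correct. Note that the paper does not actually prove this lemma: it simply defers to Runst--Sickel \cite{r1}, Lemma 4.2, where the locality of products defined via the Fourier-multiplier approximations $S^j$ is established. Your argument is therefore a genuinely independent, self-contained alternative, and it hinges on exactly the right point: since $S^j$ does not preserve supports, one cannot localize $S^jf$ itself, but must instead trade the rapid decay of $S^jf$ at a fixed positive distance from $\bar D$ (rate $2^{j(d+m-N)}$ for every $N$, from the finite order $m$ of the compactly supported distribution $f$ and the Schwartz decay of the dilated kernel $K_j$) against the at most exponential-in-$j$ growth of $S^jg$ on the compact set $\supp\phi$ (rate $2^{j(d+m')}$ from the tempered seminorm estimate for $g$). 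Choosing $N>2d+m+m'$ makes the product tend to zero uniformly on $\supp\phi$, and since $S^jf\,S^jg$ is a smooth function of polynomial growth, its pairing with $\phi$ is the integral you estimate, so $(fg)(\phi)=0$ whenever $\supp\phi\cap\bar D=\emptyset$; this is precisely the support statement. Two cosmetic points: the inequality $|x-y|\ge\varepsilon$ should be $|x-y|\ge\varepsilon/2$ once $y$ ranges over an $\varepsilon/2$-neighbourhood of $\bar D$ rather than over $\bar D$ itself (this changes nothing), and it is worth stating explicitly that the lemma is only meaningful when the limit defining $fg$ exists, which is the standing assumption in the paper. What your route buys is transparency and independence from \cite{r1}; what the citation buys the author is brevity and a statement already formulated for the paraproduct calculus used elsewhere in the paper.
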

Our aim now is to apply such product to $\nabla \underbar {\textit{u}}(s)$ and $\nabla Z$. We will denote by $\langle\cdot,\cdot\rangle$ the pointwise product combined with the scalar product in $\mathbb{R}^d$.

\begin{proposition}\label{proposition maggiorazione prodotto puntuale per uZ}
 Let $\underbar {\textit{u}}(s)\in \tilde H_p^{1+\delta}(D)$, $Z\in H_q^{1-\beta}(\mathbb{R}^d)$ for $1<p,q<\infty$, $q>\max(p,\frac{d}{\delta})$, $0<\beta<\frac{1}{2}$ and $\beta<\delta$. Then the pointwise multiplication $\langle \nabla \underbar {\textit{u}}(s),\nabla Z\rangle$ is well defined, it belongs to the space $\tilde H_p^{-\beta}(D)$ and we have the following bound
\begin{equation}\label{eq bound pointwise product in bar spaces}
 \|\langle \nabla \underbar {\textit{u}}(s),\nabla Z\rangle  \vert { \tilde H_p^{-\beta}(D)}\|\leq c \| \nabla \underbar {\textit{u}}(s) \vert {\tilde H_p^{\delta}(D)}\|\cdot \| \nabla Z \vert { H_q^{-\beta}(\mathbb{R}^d)}\|.
\end{equation}
\end{proposition}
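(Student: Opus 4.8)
The plan is to reduce the assertion to the scalar pointwise‑multiplication estimate of Lemma~\ref{lemma maggiorazione pointwise multiplication}, applied to each pair of partial derivatives, and to recover the statement on $D$ via the locality property of Lemma~\ref{lemma supporto pointwise multiplication}.

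First I would record the action of differentiation on the spaces involved. By standard Fourier multiplier theory, for any $\alpha\in\mathbb{R}$ and $1<r<\infty$ the operator $\partial_i$ maps $H^\alpha_r(\mathbb{R}^d)$ boundedly into $H^{\alpha-1}_r(\mathbb{R}^d)$, and it does not enlarge supports. Hence, starting from $\underbar {\textit{u}}(s)\in \tilde H_p^{1+\delta}(D)$ and $Z\in H_q^{1-\beta}(\mathbb{R}^d)$, for every $i=1,\dots,d$ we get $\partial_i\underbar {\textit{u}}(s)\in H_p^{\delta}(\mathbb{R}^d)$ with $\supp(\partial_i\underbar {\textit{u}}(s))\subset\bar D$, i.e.\ $\partial_i\underbar {\textit{u}}(s)\in\tilde H_p^{\delta}(D)$, and $\partial_i Z\in H_q^{-\beta}(\mathbb{R}^d)$; the corresponding norms are controlled by $\|\nabla\underbar {\textit{u}}(s)\,\vert\,\tilde H_p^\delta(D)\|$ and $\|\nabla Z\,\vert\,H_q^{-\beta}(\mathbb{R}^d)\|$, using the componentwise convention for vector norms recalled in Subsection~\ref{subsection framework}.

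Next, since the hypotheses $0<\beta<\delta$ and $q>\max(p,\frac{d}{\delta})$ are precisely those of Lemma~\ref{lemma maggiorazione pointwise multiplication} with $f=\partial_i\underbar {\textit{u}}(s)$ and $g=\partial_i Z$, for each $i$ the pointwise product $\partial_i\underbar {\textit{u}}(s)\,\partial_i Z$ is well defined (the limit $\lim_{j\to\infty}S^j(\partial_i\underbar {\textit{u}}(s))\,S^j(\partial_i Z)$ converges in $\mathcal S'(\mathbb{R}^d)$) and lies in $H_p^{-\beta}(\mathbb{R}^d)$ with
\[
\|\partial_i\underbar {\textit{u}}(s)\,\partial_i Z\,\vert\,H_p^{-\beta}(\mathbb{R}^d)\|\le c\,\|\partial_i\underbar {\textit{u}}(s)\,\vert\,H_p^{\delta}(\mathbb{R}^d)\|\cdot\|\partial_i Z\,\vert\,H_q^{-\beta}(\mathbb{R}^d)\|.
\]
Summing over $i$ defines $\langle\nabla\underbar {\textit{u}}(s),\nabla Z\rangle:=\sum_{i=1}^d\partial_i\underbar {\textit{u}}(s)\,\partial_i Z\in H_p^{-\beta}(\mathbb{R}^d)$, and the triangle inequality followed by the Cauchy–Schwarz inequality in $\mathbb{R}^d$ converts $\sum_i\|\partial_i\underbar {\textit{u}}(s)\|\,\|\partial_i Z\|$ into the product of the two vector norms, which yields the desired bound at the level of $H_p^{-\beta}(\mathbb{R}^d)$.

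Finally, to transfer the statement to $D$, I would invoke Lemma~\ref{lemma supporto pointwise multiplication}: since $\supp(\partial_i\underbar {\textit{u}}(s))\subset\bar D$, also $\supp(\partial_i\underbar {\textit{u}}(s)\,\partial_i Z)\subset\bar D$ for each $i$, hence $\supp\langle\nabla\underbar {\textit{u}}(s),\nabla Z\rangle\subset\bar D$. Because $\beta<\frac12$, the space $\tilde H_p^{-\beta}(D)$ is well defined and is equipped with the $H_p^{-\beta}(\mathbb{R}^d)$ norm, so the estimate just obtained is exactly (\ref{eq bound pointwise product in bar spaces}). I do not anticipate a genuine obstacle: the only points demanding some care are the boundedness and support‑preservation of $\partial_i$ on Bessel‑potential spaces of possibly negative order, and the bookkeeping that passes from the componentwise estimates to the vector‑norm estimate; the three parameter restrictions are used in a transparent way — $\beta<\frac12$ legitimizes the target space, while $\beta<\delta$ and $q>\max(p,\frac{d}{\delta})$ are exactly what Lemma~\ref{lemma maggiorazione pointwise multiplication} requires.
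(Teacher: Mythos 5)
Your proposal is correct and follows essentially the same route as the paper: apply the scalar multiplication estimate of Lemma~\ref{lemma maggiorazione pointwise multiplication} to the pairs of partial derivatives, then use the locality property of Lemma~\ref{lemma supporto pointwise multiplication} together with $\beta<\frac12$ to land in $\tilde H_p^{-\beta}(D)$ with the same norm. You are in fact slightly more explicit than the paper about the componentwise bookkeeping (summing over $i$ and using Cauchy--Schwarz to recover the vector norms), which the paper glosses over by treating $\nabla\underbar{\textit{u}}(s)$ and $\nabla Z$ as if they were scalars; this is a presentational difference, not a mathematical one.
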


\begin{proof}
The idea is to apply first Lemma \ref{lemma maggiorazione pointwise multiplication}  to define the product as an element of $ H_p^{-\beta}(\mathbb{R}^d)$ and then restrict it to $\tilde  H_p^{-\beta}(D)$ with the help of Lemma \ref{lemma supporto pointwise multiplication}.\\
Let $f=\nabla \underbar {\textit{u}}(s)$ and $g=\nabla Z$. We should check the conditions in Lemma \ref{lemma maggiorazione pointwise multiplication}. Clearly $g\in H^{-\beta}_q(\mathbb{R}^d) $ because $Z\in H^{1-\beta}_q(\mathbb{R}^d)$ and it is easy to show that $(\nabla)_i$ is bounded form $ H^\gamma(\mathbb{R}^d)$ to $H^{\gamma-1}(\mathbb{R}^d)$ for every $\gamma\in \mathbb{R}$ and for all $i=1,\ldots, d$. The fact that $f\in H^{\delta}_p(\mathbb{R}^d)$ is also clear since $ \tilde H^{1+\delta}_p(D)\subset H^{1+\delta}_p(\mathbb{R}^d)$. \\
Denote $m(s):=\langle \nabla \underbar {\textit{u}}(s),\nabla Z\rangle \in H_p^{-\beta}(\mathbb{R}^d) $ and by Lemma \ref{lemma maggiorazione pointwise multiplication} we get
\[
  \| m(s)\vert { H_p^{-\beta}(\mathbb{R}^d)}\|\leq c \|\nabla \underbar {\textit{u}}(s)  \vert {H_p^{\delta}(\mathbb{R}^d)}\|\cdot \| \nabla Z \vert { H_q^{-\beta}(\mathbb{R}^d)}\|<\infty.
\]
Since $\supp \underbar {\textit{u}}(s) \subset \bar D$ then $\supp \nabla \underbar {\textit{u}}(s) \subset \bar D$ and so by Lemma \ref{lemma supporto pointwise multiplication} it follows $\supp m(s)\subset \bar D$ and so $m(s)\in \tilde H_p^{-\beta}(D)$ since $\beta<\frac{1}{2}$. Moreover,
\begin{align*}
  & \|\langle \nabla \underbar {\textit{u}}(s), \nabla Z\rangle  \vert { \tilde H_p^{-\beta}(D)}\|= \|\langle \nabla \underbar {\textit{u}}(s),\nabla Z\rangle  \vert { H_p^{-\beta}(\mathbb{R}^d)}\| \nonumber \\
 \leq  c \| \nabla \underbar {\textit{u}}(s) \vert& {H_p^{\delta}(\mathbb{R}^d)}\|\cdot \| \nabla Z \vert H_q^{-\beta}(\mathbb{R}^d) \|  = c \| \nabla \underbar {\textit{u}}(s) \vert {\tilde H_p^{\delta}(D)}\|\cdot \| \nabla Z \vert H_q^{-\beta}(\mathbb{R}^d) \|.
\end{align*}
\end{proof}

 The notion of mild solution is now formalized. Next we check the convergence of the integral, so for any fixed $\underbar {\textit{u}}(r)\in \tilde H^{1+\delta}(D)$ define the integral operator $I$  by
\begin{equation}\label{def integral operator}
I_{t}(\underbar {\textit{u}}):=\int_0^t P_{t-r} \langle \nabla \underbar {\textit{u}}(r),\nabla Z\rangle \, \mathrm{d} r
\end{equation}
for any $t\in [0,T]$. We consider this operator acting on the H\"older space $C^\gamma([0,T];X)$ into itself (this mapping property will be proven later, see Theorem \ref{teorema per I in spazi di holder (rho)}) for some suitable $\gamma$ and for some infinite dimensional Banach space $X$. The H\"older space is defined as
\[
C^\gamma([0,T];X):=\left\{h:[0,T]\to X \text{ s.t. } \|h\|_{\gamma, X}<\infty \right\}
\]
where
\[
\|h\|_{\gamma, X}:=\sup_{t\in[0,T]}\|h(t)\|_X +
\sup_{s<t\in[0,T]}\frac{\|h(t)-h(s)\|_X}{(t-s)^\gamma} .
\]
When $X=\tilde H^{1+\delta}(D)$ the norm will be indicated by $\|\cdot\|_{\gamma,1+\delta}$. Next we introduce a family of equivalent norms $\|\cdot \|_{\gamma,X}^{(\rho)} , \rho\geq 1 $ defined by
\[
\|f\|_{\gamma,X}^{(\rho)} := \sup_{0\leq t\leq T} e^{-\rho t}\left(\|f(t)\|_{X} +\sup_{0\leq s<t} \frac{\|f(t)-f(s)\|_X}{(t-s)^\gamma} \right).
\]


\section{The main result}\label{section main result}
In this section we prove the contractivity of the operator $I$ in the H\"older space $C^\gamma([0,T];\tilde H^{1+\delta}(D))$.

\subsection{Mapping property of $I$}
Recall that $m(r):= \langle \nabla  \underbar {\textit{u}}(r), \nabla Z\rangle$ for all $0\leq r\leq T$.

\begin{proposition}\label{proposition m(r) in spazi di sobolev}

Let  $0<\beta<\frac{1}{2}$ and $\beta<\delta$ and fix a function $Z\in H_q^{1-\beta}(\mathbb{R}^d)$ for some $q>\max(2,\frac{d}{\delta})$. Then for all $0\leq r\leq t \leq T$ and $\underbar {\textit{u}}(t)\in \tilde H^{1+\delta}(D)$ we have

\begin{enumerate}
 \item [(1)] $\|m(r)\vert {\tilde H^{-\beta}(D)}\| \leq c\|\underbar {\textit{u}}(r)|\tilde H^{1+\delta}(D)\|$;\\
 \item [(2)] $\|m(t)-m(r)\vert {\tilde H^{-\beta}(D)}\| \leq c\|\underbar {\textit{u}}(t)-\underbar {\textit{u}}(r)|\tilde H^{1+\delta}(D)\|$.
\end{enumerate}
\end{proposition}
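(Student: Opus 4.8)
The plan is to obtain both estimates as essentially immediate consequences of Proposition~\ref{proposition maggiorazione prodotto puntuale per uZ} specialized to $p=2$, together with the linearity of the pointwise product in its first argument. The standing hypotheses here ($0<\beta<\frac12$, $\beta<\delta$, $Z\in H_q^{1-\beta}(\mathbb{R}^d)$ with $q>\max(2,\frac d\delta)$) are precisely those of that proposition with $p=2$, so for every $r$ with $\underbar{\textit{u}}(r)\in\tilde H^{1+\delta}(D)$ the product $m(r)=\langle\nabla\underbar{\textit{u}}(r),\nabla Z\rangle$ is well defined in $\tilde H^{-\beta}(D)$ and satisfies
\[
\|m(r)|\tilde H^{-\beta}(D)\|\leq c\,\|\nabla\underbar{\textit{u}}(r)|\tilde H^{\delta}(D)\|\cdot\|\nabla Z|H_q^{-\beta}(\mathbb{R}^d)\|.
\]

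For part (1) it then remains only to bound $\|\nabla\underbar{\textit{u}}(r)|\tilde H^{\delta}(D)\|$ by $\|\underbar{\textit{u}}(r)|\tilde H^{1+\delta}(D)\|$. This is the argument already used inside the proof of Proposition~\ref{proposition maggiorazione prodotto puntuale per uZ}: each $\partial_i$ is bounded from $H^{1+\delta}(\mathbb{R}^d)$ to $H^{\delta}(\mathbb{R}^d)$, and since differentiation does not enlarge supports we have $\supp\partial_i\underbar{\textit{u}}(r)\subset\supp\underbar{\textit{u}}(r)\subset\bar D$, so $\partial_i\underbar{\textit{u}}(r)\in\tilde H^{\delta}(D)$ (legitimate as $\delta>0>-\frac12$) with $\|\partial_i\underbar{\textit{u}}(r)|\tilde H^{\delta}(D)\|\leq c\|\underbar{\textit{u}}(r)|\tilde H^{1+\delta}(D)\|$; summing over $i$ and taking the square root controls the vector norm. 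Finally $\|\nabla Z|H_q^{-\beta}(\mathbb{R}^d)\|$ is a fixed finite number (indeed $\nabla Z\in H_q^{-\beta}(\mathbb{R}^d)$ because $Z\in H_q^{1-\beta}(\mathbb{R}^d)$) and is absorbed into $c$, giving (1).

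For part (2) I would use that the pointwise product is linear in its first slot: since each $S^j$ is linear and, under our hypotheses, the relevant limits $\lim_j S^jfS^jg$ exist in $\mathcal S'(\mathbb{R}^d)$ by Lemma~\ref{lemma maggiorazione pointwise multiplication}, one has $\langle\nabla f_1,\nabla Z\rangle+\langle\nabla f_2,\nabla Z\rangle=\langle\nabla(f_1+f_2),\nabla Z\rangle$ for $f_1,f_2\in\tilde H^{1+\delta}(D)$. Hence $m(t)-m(r)=\langle\nabla(\underbar{\textit{u}}(t)-\underbar{\textit{u}}(r)),\nabla Z\rangle$, and since $\underbar{\textit{u}}(t)-\underbar{\textit{u}}(r)\in\tilde H^{1+\delta}(D)$ (a difference of two elements of that space, still supported in $\bar D$), applying (1) to this function yields exactly (2).

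I do not expect a genuine obstacle: the only point requiring a word of justification rather than a bare citation is the linearity of the product in the first argument, and everything else is a direct specialization of Proposition~\ref{proposition maggiorazione prodotto puntuale per uZ} together with the elementary mapping properties of $\nabla$. The role of this proposition is simply to repackage the earlier product estimate for the $\tilde H^{-\beta}(D)$-valued curve $r\mapsto m(r)$, in the form that will be fed into the mapping property of the integral operator $I$.
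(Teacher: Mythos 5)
Your proposal is correct and follows essentially the same route as the paper: specialize Proposition~\ref{proposition maggiorazione prodotto puntuale per uZ} to $p=2$, use the boundedness of $\nabla$ from $H^{1+\delta}(\mathbb{R}^d)$ to $H^{\delta}(\mathbb{R}^d)$ together with support preservation for (1), and the linearity of the paraproduct in its first argument to reduce (2) to (1). Your extra remark justifying that linearity via the linearity of the $S^j$ and the convergence guaranteed by Lemma~\ref{lemma maggiorazione pointwise multiplication} is a welcome clarification of a point the paper treats as obvious.
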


\begin{proof}
To see (1), observe that by definition $\nabla \underbar {\textit{u}}(r)\in \tilde H^\delta(D)$ means that $ \nabla \underbar {\textit{u}}(r)\in  H^\delta(\mathbb{R}^d)$ and $\supp(\nabla \underbar {\textit{u}}(r)) \subset \bar D$. Also $(\nabla)_j : H^{1+\delta}(\mathbb{R}^d)\to H^\delta(\mathbb{R}^d)$ is bounded for  all $\delta$, i.e. for all $f\in H^{1+\delta}(\mathbb{R}^d) $ there exists $c>0$ such that $\|\nabla f \|_\delta \leq c \| f \|_{1+\delta}$.
These results combined with Proposition \ref{proposition maggiorazione prodotto puntuale per uZ} (where $p=2$) lead to (1).\\
Now we prove (2). Since $\tilde H^{-\beta}(D)$ is a linear space then $m(t)-m(r)\in \tilde H^{-\beta}(D)$. The pointwise product and the operator $\nabla$ are linear so we can write $m(t)-m(r)= \langle \nabla \underbar {\textit{u}}(t)- \nabla \underbar {\textit{u}}(r), \nabla Z\rangle  =  \langle \nabla(  \underbar {\textit{u}}(t)-  \underbar {\textit{u}}(r)), \nabla Z\rangle$. Clearly $\underbar {\textit{u}}(t)-  \underbar {\textit{u}}(r)$ is an element of $ \tilde H^{\delta}(D)\subset   H^{\delta}(\mathbb{R}^d)$ so we proceed in the same way as for (1) and we get the wanted result.
\end{proof}

\begin{proposition}\label{propr su P_t}
Let $0<\beta<\delta<\frac{1}{2}$ and $w\in \tilde H^{-\beta}(D)$. Then $P_t w\in \tilde H^{1+\delta}(D)$ for any $t>0$ and moreover there exists a positive constant $c$ such that
\begin{equation} \label{eq bound per Pt in sobolev frazionario}
\left\|P_t w\right\|_{1+\delta}\leq c\left\|w\right\|_{-\beta} t^{-\frac{1+\delta+\beta}{2}}.
\end{equation}
\end{proposition}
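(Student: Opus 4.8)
The plan is to combine the mapping property of the Dirichlet heat semigroup in fractional Sobolev scales (Corollary~\ref{propr per Pt in sobolev frazionario}) with the quantitative smoothing estimate provided by Theorem~\ref{theor pazy}(c), transferring the latter from the domain of fractional powers of $A$ to the scale $\tilde H^\alpha(D)$ via the isomorphism~(\ref{bound for the isomorphic spaces H}). First I would note that since $0<\beta<\delta<\frac12$ we have $-\frac12<-\beta$ and $-\frac12<1+\delta<\frac32$, and also $\rho+\gamma:=(1+\delta)+\beta$ satisfies $-\frac12<(1+\delta)+(-\beta)<\frac32$ once we check $1+\delta-\beta<\frac32$, i.e. $\delta-\beta<\frac12$, which holds; hence Corollary~\ref{propr per Pt in sobolev frazionario} applies with $\gamma=-\beta$ and $\rho=1+\delta+\beta$, giving $P_tw\in\tilde H^{1+\delta}(D)$ for $t>0$ and, in particular, $\supp(P_tw)\subset\bar D$.

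For the quantitative bound the plan is as follows. By~(\ref{bound for the isomorphic spaces H}) applied with the pair of exponents $\gamma_0=1+\delta$ and $\gamma_0-\alpha_0=-\beta$, i.e. $\alpha_0=1+\delta+\beta$ (which satisfies $-\frac12<1+\delta<\frac32$ and $-\frac12<-\beta<\frac32$, so the isomorphism is legitimate), the norm $\|P_tw\|_{1+\delta}$ is comparable to $\|A^{\frac{\alpha_0}{2}}P_tw\|_{-\beta}=\|A^{\frac{1+\delta+\beta}{2}}P_tw\|_{-\beta}$. Next I split a fractional power off the semigroup: writing $s:=\frac{1+\delta+\beta}{2}$, by Theorem~\ref{theor pazy}(b)--(c) and the semigroup property $P_t=P_{t/2}P_{t/2}$ we have $A^{s}P_tw=P_{t/2}\,(A^{s}P_{t/2})\,w$, and since $(P_t)$ is a contraction semigroup on $L_2(D)$ of negative type, Theorem~\ref{theor pazy}(c) with $X=L_2(D)$ gives $\|A^{s}P_{t/2}\|_{\mathscr L(L_2(D))}\le M_s e^{-\theta t/2}(t/2)^{-s}\le c\,t^{-s}$. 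Strictly speaking one wants the operator norm on $L_2(D)=\tilde H^0(D)$, so I would first reduce $\|A^{s}P_tw\|_{-\beta}$ to an $L_2$-statement: using~(\ref{bound for the isomorphic spaces H}) once more to write $\|A^{s}P_tw\|_{-\beta}\le c\,\|A^{\beta/2}A^{s}P_tw\|_{0}=c\,\|A^{s+\beta/2}P_tw\|_0$ (legitimate since $-\frac12<-\beta,0<\frac32$), then apply Theorem~\ref{theor pazy}(c) with $\alpha=s+\beta/2$ to get $\|A^{s+\beta/2}P_tw\|_0\le M\,e^{-\theta t}t^{-(s+\beta/2)}\|w\|_0$, and finally dominate $\|w\|_0$ by $c\|w\|_{-\beta}$ is \emph{false} in the wrong direction, so instead I keep $w\in\tilde H^{-\beta}(D)$ and factor $A^{-\beta/2}$ out of $w$: writing $v:=A^{-\beta/2}w$, relation~(\ref{bound for the isomorphic spaces H}) gives $v\in L_2(D)$ with $\|v\|_0\le c\|w\|_{-\beta}$, and $A^{s}P_tw=A^{s+\beta/2}P_tv$ since $A^{-\beta/2}$ commutes with $P_t$ and with powers of $A$. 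Then $\|A^{s+\beta/2}P_tv\|_0\le M_{s+\beta/2}e^{-\theta t}t^{-(s+\beta/2)}\|v\|_0\le c\,t^{-(s+\beta/2)}\|w\|_{-\beta}$, and chaining the comparisons yields $\|P_tw\|_{1+\delta}\le c\,t^{-(s+\beta/2)}\|w\|_{-\beta}$ with $s+\beta/2=\frac{1+\delta+\beta}{2}+\frac\beta2$. Hmm — that exponent is $\frac{1+\delta+2\beta}{2}$, not the claimed $\frac{1+\delta+\beta}{2}$; the discrepancy means one should \emph{not} pass all the way down to $L_2$ but rather apply Theorem~\ref{theor pazy}(c) directly on the Banach space $X=\tilde H^{-\beta}(D)$, on which $-A$ still generates an analytic contraction semigroup of negative type (this follows because $A^{\beta/2}$ is an isometric-up-to-constants intertwiner between the realizations of $A$ on $L_2$ and on $\tilde H^{-\beta}$, so all hypotheses of Theorem~\ref{theor pazy} transfer). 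Working on $X=\tilde H^{-\beta}(D)$, part~(c) with $\alpha=s=\frac{1+\delta+\beta}{2}$ gives $\|A^{s}P_t\|_{\mathscr L(\tilde H^{-\beta}(D))}\le M_s e^{-\theta t}t^{-s}$, hence $\|A^{s}P_tw\|_{-\beta}\le c\,t^{-\frac{1+\delta+\beta}{2}}\|w\|_{-\beta}$, and combining with the isomorphism $\|P_tw\|_{1+\delta}\le c\|A^{s}P_tw\|_{-\beta}$ from~(\ref{bound for the isomorphic spaces H}) delivers exactly~(\ref{eq bound per Pt in sobolev frazionario}).

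The main obstacle, then, is bookkeeping the constraints on exponents so that every invocation of~(\ref{bound for the isomorphic spaces H}) stays within the admissible window $-\frac12<\gamma,\gamma-\alpha<\frac32$ and every invocation of Theorem~\ref{theor pazy}(c) is on a Banach space where $-A$ genuinely generates an analytic contraction semigroup of negative type; the substantive point is justifying that the realization of $A$ on $\tilde H^{-\beta}(D)$ inherits these properties from its realization on $L_2(D)$, which is where the isomorphism $A^{\beta/2}\colon\tilde H^{0}(D)\to\tilde H^{-\beta}(D)$ does the work. I would also remark that the proof simultaneously reproves the qualitative assertion $P_tw\in\tilde H^{1+\delta}(D)$, so the appeal to Corollary~\ref{propr per Pt in sobolev frazionario} is only needed to record the support property $\supp(P_tw)\subset\bar D$, which in fact also follows directly from Lemma~\ref{lemma supporto pointwise multiplication}-type reasoning or from the fact that $P_t$ preserves functions supported in $\bar D$.
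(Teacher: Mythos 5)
Your final argument does prove the estimate, but it is worth recognizing that it is the paper's proof in disguise, and that the detour which convinced you to abandon the $L_2$ route rests on a sign error. The paper simply writes $\|P_tw\|_{1+\delta}\le c\|A^{\frac{1+\delta}{2}}P_tw\|_0=c\|A^{\frac{1+\delta+\beta}{2}}P_t(A^{-\frac{\beta}{2}}w)\|_0$, applies Theorem~\ref{theor pazy}(c) only on $X=L_2(D)$ (where its hypotheses are already established), and uses $\|A^{-\beta/2}w\|_0\le c\|w\|_{-\beta}$; this yields exactly $t^{-\frac{1+\delta+\beta}{2}}$. Your intermediate step $\|A^{s}P_tw\|_{-\beta}\le c\|A^{\beta/2}A^{s}P_tw\|_0$ is where things went wrong: the isomorphism~(\ref{bound for the isomorphic spaces H}) with $\gamma=-\beta$ and target index $\gamma-\alpha=0$ forces $\alpha=-\beta$, i.e.\ $\|g\|_{-\beta}\asymp\|A^{-\beta/2}g\|_0$, so the correct reduction gives $\|A^{s}P_tw\|_{-\beta}\asymp\|A^{s-\beta/2}P_tw\|_0=\|A^{\frac{1+\delta}{2}}P_tw\|_0$, and after shifting $A^{\beta/2}$ onto $w$ one lands on the exponent $\frac{1+\delta+\beta}{2}$ with no loss of $\beta/2$; the diagnosis ``one should not pass all the way down to $L_2$'' is therefore mistaken. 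Your replacement argument --- invoking Theorem~\ref{theor pazy}(c) on $X=\tilde H^{-\beta}(D)$ --- is legitimate, but the transference of its hypotheses (analyticity, $\|P_t\|\le Me^{-\omega t}$, identification of the fractional powers) to $\tilde H^{-\beta}(D)$ is precisely the conjugation by $A^{-\beta/2}$ which, once written out, reproduces the paper's two lines; so it buys nothing here except an extra justification the paper does not need. Two smaller points: your appeal to Corollary~\ref{propr per Pt in sobolev frazionario} with $\rho=1+\delta+\beta$ falls outside its stated range whenever $\delta+\beta\ge\frac12$ (the corollary requires $\rho<\frac32$), though this is harmless since, as you note, the quantitative estimate reproves membership in $\tilde H^{1+\delta}(D)$; and the support property is automatic because every norm in the chain is taken in the $\tilde H$ scale on $D$.
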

\begin{proof}
Let $w\in \tilde H^{-\beta}(D)$. By (\ref{bound for the isomorphic spaces H}) we have
\begin{align*}
 \|P_t w\|_{1+\delta}& \leq c\| A^{\frac{1+\delta}{2}} P_t w \|_{0} = c\| A^{\frac{1+\delta}{2}}A^{\frac{\beta}{2}}A^{-\frac{\beta}{2}} P_t w\|_{0} =  c\| A^{\frac{1+\delta+\beta}{2} }P_t A^{-\frac{\beta}{2}} w \|_{0}.
 \end{align*}
Since $w\in \tilde H^{-\beta}(D)$ then by (\ref{bound for the isomorphic spaces H}) we have also $A^{-\frac{\beta}{2}} w\in L_2(D)$ and Theorem \ref{theor pazy} part  (c) ensures that the following bound holds for all $t>0$
\[
\| A^{\frac{1+\delta+\beta}{2} }P_t \|_{\mathscr L(L_2(D))} \leq M e^{-\theta t} t^{-\frac{1+\delta+\beta}{2}}.
\]
This fact together with the previous bound implies
\begin{align*}
 \|P_t w\|_{1+\delta}& \leq c t^{-\frac{1+\delta+\beta}{2}} \|A^{-\frac{\beta}{2}} w\|_0  \leq  c t^{-\frac{1+\delta+\beta}{2}} \| w\|_{-\beta} <\infty,
 \end{align*}
 having used in the last inequality again equation (\ref{bound for the isomorphic spaces H}).
\end{proof}

These two properties can be generalized to a wider range of parameters $\delta$ and $\beta$ (for more details see \cite{t3}).\\
The following integral bounds will be used later. The proof makes use of the Gamma and the Beta functions together with some basic integral estimates.
\begin{lemma}\label{lemma stima integrale con fz gamma}
If $0\leq s<t\leq T<\infty$ and $0\leq \theta<1$ then for any $\rho\geq 1$ it holds
\begin{equation}\label{eq 1 stima integrale con fz gamma}
 \int_s^t e^{-\rho r} r^{-\theta} \mathrm{d} r\leq\Gamma(1-\theta) \rho^{\theta-1}.
\end{equation}
Moreover if $\gamma>0$ is such that $ \theta+\gamma<1$ then for any $ \rho \geq 1$ there exists a positive constant $C$ such that
\begin{equation}\label{eq 2 stima integrale con fz gamma}
\int_0^{t} e^{-\rho(t- r)} (t-r)^{-\theta} r^{-\gamma} \mathrm{d} r\leq C \rho^{\theta -1+\gamma}.
\end{equation}
\end{lemma}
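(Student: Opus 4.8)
The plan is to prove the two estimates separately. For \eqref{eq 1 stima integrale con fz gamma} the integrand is nonnegative, so I would enlarge the domain from $[s,t]$ to $[0,\infty)$ and substitute $u=\rho r$:
\[
\int_s^t e^{-\rho r}r^{-\theta}\,\mathrm{d}r\le\int_0^\infty e^{-\rho r}r^{-\theta}\,\mathrm{d}r=\rho^{\theta-1}\int_0^\infty e^{-u}u^{-\theta}\,\mathrm{d}u=\Gamma(1-\theta)\,\rho^{\theta-1},
\]
the last integral converging because $1-\theta>0$.

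For \eqref{eq 2 stima integrale con fz gamma} (we may assume $t>0$, the case $t=0$ being trivial) the key step is the substitution $u=\rho(t-r)$, which turns the left-hand side into
\[
\int_0^{t} e^{-\rho(t- r)} (t-r)^{-\theta} r^{-\gamma}\,\mathrm{d}r=\rho^{\theta+\gamma-1}\int_0^{\rho t}e^{-u}\,u^{-\theta}(\rho t-u)^{-\gamma}\,\mathrm{d}u .
\]
Thus the desired power of $\rho$ appears for free, and it remains to check that $C:=\sup_{b>0}\int_0^{b}e^{-u}u^{-\theta}(b-u)^{-\gamma}\,\mathrm{d}u<\infty$. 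For $b\le1$ I would drop the exponential factor, rescale $u=bs$ and recognize the Beta function,
\[
\int_0^{b}e^{-u}u^{-\theta}(b-u)^{-\gamma}\,\mathrm{d}u\le b^{\,1-\theta-\gamma}\int_0^1 s^{-\theta}(1-s)^{-\gamma}\,\mathrm{d}s=b^{\,1-\theta-\gamma}B(1-\theta,1-\gamma)\le B(1-\theta,1-\gamma),
\]
where the hypothesis $\theta+\gamma<1$ guarantees both convergence of the Beta integral and $b^{1-\theta-\gamma}\le1$. For $b>1$ I would split the integral at $b/2$: on $[0,b/2]$ one has $(b-u)^{-\gamma}\le(b/2)^{-\gamma}\le2^{\gamma}$, so this part is at most $2^{\gamma}\int_0^\infty e^{-u}u^{-\theta}\,\mathrm{d}u=2^{\gamma}\Gamma(1-\theta)$; on $[b/2,b]$ one has $u>1/2$, hence $e^{-u}u^{-\theta}\le2^{\theta}e^{-b/2}$, and integrating $(b-u)^{-\gamma}$ over this interval leaves the factor $\tfrac{2^{\theta}}{1-\gamma}e^{-b/2}(b/2)^{1-\gamma}$, which stays bounded in $b$ because $w\mapsto e^{-w}w^{1-\gamma}$ is bounded on $(0,\infty)$. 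Taking the maximum over the two regimes yields a finite $C$, and hence \eqref{eq 2 stima integrale con fz gamma}.

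The substitutions and the elementary monotonicity and boundedness estimates are mechanical; the only genuine idea --- and the point that needs care --- is the change of variables $u=\rho(t-r)$ in the second estimate, since it is exactly this rescaling that extracts the decay $\rho^{\theta+\gamma-1}$ and collapses the convolution into a $\rho$-independent integral over $[0,\rho t]$. A naive Beta-function estimate applied directly to the original integral only gives a bound of order $t^{1-\theta-\gamma}\le T^{1-\theta-\gamma}$, with no decay in $\rho$, which would be too weak for the fixed-point argument carried out later.
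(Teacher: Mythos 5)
Your proof is correct, and it follows exactly the route the paper indicates (the paper omits the details, saying only that the proof ``makes use of the Gamma and the Beta functions together with some basic integral estimates''): the substitution $u=\rho r$ giving $\Gamma(1-\theta)\rho^{\theta-1}$ for the first bound, and the rescaling $u=\rho(t-r)$ plus a Beta-function/splitting argument to show the remaining integral is bounded uniformly in $b=\rho t$ for the second. No gaps; the case analysis $b\le 1$ versus $b>1$ is carried out cleanly and all the hypotheses ($\theta<1$, $\theta+\gamma<1$) are used where needed.
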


In what follows we state and give the proof of the main mapping property of the integral operator: It is a contraction on a Banach space of function with H\"older-type regularity in time and fractional Sobolev-type regularity in space.

\begin{theorem}
\label{teorema per I in spazi di holder (rho)}
 Let $0<\beta<\delta<\frac{1}{2}$ and  $Z\in H_q^{1-\beta}(\mathbb{R}^d)$ for $q>\max(2, \frac{d}{\delta})$. Then for any $\gamma$ such that $ 0<2\gamma< 1-\delta-\beta$ it holds
\[
I:C^\gamma([0,T];\tilde H^{1+\delta} (D) )\to C^\gamma([0,T];\tilde H^{1+\delta} (D))
\]
and the following estimate holds for any fixed $\underbar {\textit{u}}\in C^\gamma([0,T];\tilde H^{1+\delta} (D))$
\begin{equation}\label{eq mapping property per I (rho)}
 \|I_{(\cdot)}(\underbar {\textit{u}})\|_{\gamma,1+\delta}^{(\rho)} \leq c(\rho)\|\underbar {\textit{u}}\|^{(\rho)}_{\gamma,1+\delta}
\end{equation}
where $c(\rho)$ is a function of $\rho$ not depending on $\underbar {\textit{u}}$ nor $T$ and such that
\[\lim_{\rho\to \infty}c(\rho)=0.\]
\end{theorem}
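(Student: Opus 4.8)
Below is a plan for how I would establish Theorem~\ref{teorema per I in spazi di holder (rho)}.

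The plan is to estimate the two ingredients of the weighted norm $\|\cdot\|_{\gamma,1+\delta}^{(\rho)}$ of $t\mapsto I_t(\underbar{\textit{u}})$ — the weighted supremum and the weighted H\"older seminorm — separately, showing in each case a bound carrying a strictly negative power of $\rho$ times $\|\underbar{\textit{u}}\|_{\gamma,1+\delta}^{(\rho)}$; since membership in $C^\gamma([0,T];\tilde H^{1+\delta}(D))$ is nothing but finiteness of this (equivalent) norm, this simultaneously gives the mapping property, the estimate~(\ref{eq mapping property per I (rho)}), and $\lim_{\rho\to\infty}c(\rho)=0$. I abbreviate $\kappa:=\frac{1+\delta+\beta}{2}$: because $\delta,\beta<\frac12$ one has $\kappa<1$, and the hypothesis $2\gamma<1-\delta-\beta$ is precisely the statement $\kappa+\gamma<1$. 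The two elementary inputs are Proposition~\ref{proposition m(r) in spazi di sobolev}(1), giving $\|m(r)\|_{-\beta}\le c\,\|\underbar{\textit{u}}(r)\|_{1+\delta}$, and Proposition~\ref{propr su P_t}, giving $\|P_{t-r}m(r)\|_{1+\delta}\le c\,(t-r)^{-\kappa}\|m(r)\|_{-\beta}$; combined, the integrand defining $I_t(\underbar{\textit{u}})$ has $\tilde H^{1+\delta}(D)$-norm $\le c\,(t-r)^{-\kappa}\|\underbar{\textit{u}}(r)\|_{1+\delta}$, integrable over $[0,t]$ since $\kappa<1$, and — $\tilde H^{1+\delta}(D)$ being a closed subspace of $H^{1+\delta}(\mathbb{R}^d)$ — the Bochner integral $I_t(\underbar{\textit{u}})$ is a well-defined element of $\tilde H^{1+\delta}(D)$.

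For the weighted supremum I would multiply $\|I_t(\underbar{\textit{u}})\|_{1+\delta}\le c\int_0^t(t-r)^{-\kappa}\|\underbar{\textit{u}}(r)\|_{1+\delta}\,\mathrm{d}r$ by $e^{-\rho t}$, split $e^{-\rho t}=e^{-\rho(t-r)}e^{-\rho r}$, bound $e^{-\rho r}\|\underbar{\textit{u}}(r)\|_{1+\delta}\le\|\underbar{\textit{u}}\|_{\gamma,1+\delta}^{(\rho)}$, and apply Lemma~\ref{lemma stima integrale con fz gamma}, formula~(\ref{eq 1 stima integrale con fz gamma}) (after substituting $u=t-r$), to reach $e^{-\rho t}\|I_t(\underbar{\textit{u}})\|_{1+\delta}\le c\,\Gamma(1-\kappa)\,\rho^{\kappa-1}\,\|\underbar{\textit{u}}\|_{\gamma,1+\delta}^{(\rho)}$.

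For the weighted H\"older seminorm I would use the standard decomposition
\[
I_t(\underbar{\textit{u}})-I_s(\underbar{\textit{u}})=\int_s^t P_{t-r}m(r)\,\mathrm{d}r+\bigl(P_{t-s}-\mathrm{Id}\bigr)\int_0^s P_{s-r}m(r)\,\mathrm{d}r,\qquad 0\le s<t\le T.
\]
The first term is handled as before, except that for $r\in[s,t]$ I write $(t-r)^{-\kappa}=(t-r)^{-\kappa-\gamma}(t-r)^{\gamma}\le(t-s)^{\gamma}(t-r)^{-\kappa-\gamma}$ and then use~(\ref{eq 1 stima integrale con fz gamma}) with exponent $\kappa+\gamma<1$, ending with $c\,\rho^{\kappa+\gamma-1}(t-s)^{\gamma}\|\underbar{\textit{u}}\|_{\gamma,1+\delta}^{(\rho)}$. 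For the second term I would pass to the $L_2(D)$-norm by~(\ref{bound for the isomorphic spaces H}) ($\|v\|_{1+\delta}\le c\|A^{(1+\delta)/2}v\|_0$), commute $A^{(1+\delta)/2}$ past $P_{t-s}-\mathrm{Id}$ (Theorem~\ref{theor pazy}(b)), apply Theorem~\ref{theor pazy}(d) with $\alpha=\gamma\in(0,1)$ to extract $(t-s)^{\gamma}$ at the price of an extra factor $A^{\gamma}$, and then factor $A^{(1+\delta)/2+\gamma}P_{s-r}=A^{\kappa+\gamma}P_{s-r}\,A^{-\beta/2}$, controlling $A^{\kappa+\gamma}P_{s-r}$ by Theorem~\ref{theor pazy}(c) — its exponent $\kappa+\gamma$ being $<1$, the residual singularity $(s-r)^{-\kappa-\gamma}$ at $r=s$ is integrable — and $\|A^{-\beta/2}m(r)\|_0$ by $c\|m(r)\|_{-\beta}\le c\|\underbar{\textit{u}}(r)\|_{1+\delta}$. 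Multiplying by $e^{-\rho t}=e^{-\rho(t-s)}e^{-\rho(s-r)}e^{-\rho r}$, discarding $e^{-\rho(t-s)}\le1$ and the semigroup exponential, and invoking Lemma~\ref{lemma stima integrale con fz gamma} once more, this term too is $\le c\,\rho^{\kappa+\gamma-1}(t-s)^{\gamma}\|\underbar{\textit{u}}\|_{\gamma,1+\delta}^{(\rho)}$.

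Summing the three contributions gives~(\ref{eq mapping property per I (rho)}) with $c(\rho)=c\,(\rho^{\kappa-1}+\rho^{\kappa+\gamma-1})$, a quantity independent of $\underbar{\textit{u}}$ and of $T$ and tending to $0$ as $\rho\to\infty$ since $\kappa<1$ and $\kappa+\gamma<1$. I expect the main obstacle to be the second term of the H\"older decomposition: one must split the smoothing gain of $P_{t-s}-\mathrm{Id}$ against the blow-up of $A^{\kappa+\gamma}$ acting on $P_{s-r}$ so that the leftover time-singularity stays integrable — which is exactly where $2\gamma<1-\delta-\beta$ is needed — while keeping the exponential weights organized so that the resulting constant retains a genuinely negative power of $\rho$. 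The routine points (strong measurability of the integrands, following from the H\"older continuity of $\underbar{\textit{u}}$ and strong continuity of $(P_t)$; and that the fractional powers act on elements of their domains, e.g.\ $P_{s-r}m(r)\in\mathcal{D}(A^{\mu})$ for every $\mu>-\beta/2$ since $A^{-\beta/2}m(r)\in L_2(D)$) I would dispatch along the way.
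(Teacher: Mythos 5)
Your proposal is correct, and the supremum part (your weighted bound via Propositions \ref{proposition m(r) in spazi di sobolev}(1), \ref{propr su P_t} and estimate (\ref{eq 1 stima integrale con fz gamma})) coincides with the paper's Step 1. For the H\"older seminorm, however, you take a genuinely different route. The paper changes variables ($r\mapsto r-t+s$) to write
\[
I_t(\underbar {\textit{u}})-I_s(\underbar {\textit{u}})=\int_0^{t-s}P_{t-r}m(r)\,\mathrm{d} r+\int_0^{s}P_{s-r}\bigl(m(r+t-s)-m(r)\bigr)\,\mathrm{d} r,
\]
and controls the second piece through Proposition \ref{proposition m(r) in spazi di sobolev}(2), i.e.\ by transferring the temporal H\"older regularity of $\underbar {\textit{u}}$ to $m$; the factor $(t-s)^\gamma$ is thus absorbed by the H\"older seminorm of $\underbar {\textit{u}}$, and the resulting exponent is $\rho^{(\delta+\beta-1)/2}$. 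You instead split off $(P_{t-s}-\mathrm{Id})\int_0^sP_{s-r}m(r)\,\mathrm{d} r$ and extract $(t-s)^\gamma$ from the semigroup increment via Theorem \ref{theor pazy}(d), paying an extra $A^\gamma$ that you then play off against $A^{(1+\delta+\beta)/2}P_{s-r}$ using Theorem \ref{theor pazy}(c); both routes hinge on exactly the same arithmetic $\tfrac{1+\delta+\beta}{2}+\gamma<1\Leftrightarrow 2\gamma<1-\delta-\beta$. What your version buys is that neither term of your decomposition ever uses the H\"older seminorm of $\underbar {\textit{u}}$, only $\sup_te^{-\rho t}\|\underbar {\textit{u}}(t)\|_{1+\delta}$, so you in fact prove the stronger smoothing statement $\|I_{(\cdot)}(\underbar {\textit{u}})\|^{(\rho)}_{\gamma,1+\delta}\leq c(\rho)\sup_{0\leq t\leq T}e^{-\rho t}\|\underbar {\textit{u}}(t)\|_{1+\delta}$; what the paper's version buys is that it never invokes fractional powers beyond those already packaged in Proposition \ref{propr su P_t}, at the cost of genuinely needing $\underbar {\textit{u}}\in C^\gamma$. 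Do make explicit the routine justifications you flag (closedness of $A^{\mu}$ to pull it through the Bochner integral, and $\int_0^sP_{s-r}m(r)\,\mathrm{d} r\in\mathcal D(A^{(1+\delta)/2+\gamma})$ before applying Theorem \ref{theor pazy}(d)), but these pose no obstacle.
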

 \begin{proof}
Given any $\underbar {\textit{u}}\in C^\gamma([0,T];\tilde H^{1+\delta} (D))$ our goal is to bound
\begin{align}\label{equation thesis norm I (rho)}
 \|I_{(\cdot)}(\underbar {\textit{u}})\|_{\gamma,1+\delta}^{(\rho)}= & \sup_{0\leq t \leq T} \Big(e^{-\rho t} \|I_t(\underbar {\textit{u}}) \|_{1+\delta} + \\
&+e^{-\rho t} \sup_{0\leq s<t } \frac{\|I_t(\underbar {\textit{u}})-I_s(\underbar {\textit{u}})\|_{1+\delta}}{(t-s)^\gamma} \Big) \nonumber  =  :\sup_{0\leq t \leq T} \big(  (A) + (B) \big)
\end{align}
using the $(\rho)$-norm of $\underbar {\textit{u}} $, namely using $ \|\underbar {\textit{u}}\|_{\gamma,1+\delta}^{(\rho)}$. \\
\emph{Step 1:} Consider part $(A)$. Fix $t\in [0,T]$.
\begin{align*}
e^{-\rho t} \|I_t(\underbar {\textit{u}})\|_{1+\delta} & = e^{-\rho t} \|\int_0^t P_{t-r} m(r) \, \mathrm{d} r \|_{1+\delta}\\
&\leq e^{-\rho t} \int_0^t \| P_{t-r} m(r) \|_{1+\delta} \, \mathrm{d} r
\end{align*}
apply Proposition \ref{propr su P_t} with $w=m(s)\in\tilde H^{-\beta} (D) $ and afterwards Proposition \ref{proposition m(r) in spazi di sobolev}  (1) and obtain
\begin{align*}
(A)&\leq e^{-\rho t} \int_0^t \|m(r)\|_{-\beta} (t-r)^{-\frac{1+\delta+\beta}{2}} \, \mathrm{d} r\\
&\leq  c e^{-\rho t} \int_0^t \|\underbar {\textit{u}}(r) \|_{1+\delta} (t-r)^{-\frac{1+\delta+\beta}{2}} \, \mathrm{d} r.
\end{align*}
 Observe that for any $0\leq r\leq t\leq T$
\begin{align*}
e^{-\rho r} \|\underbar {\textit{u}}(r)\|_{1+\delta}&\leq \sup_{0\leq r\leq T} e^{-\rho r} \|\underbar {\textit{u}}(r) \|_{1+\delta} \leq  \|\underbar {\textit{u}} \|_{\gamma,1+\delta}^{(\rho)}
\end{align*}
and then we obtain
\begin{align*}
{(A)}&=e^{-\rho t} \|I_t(\underbar {\textit{u}})\|_{1+\delta}  \leq c \|\underbar {\textit{u}} \|_{\gamma,1+\delta}^{(\rho)} \int_0^t  e^{-\rho( t-r) }  (t-r)^{-\frac{1+\delta+\beta}{2}} \, \mathrm{d} r\\
& = c \|\underbar {\textit{u}} \|_{\gamma,1+\delta}^{(\rho)} \int_0^t  e^{-\rho r }  r^{-\frac{1+\delta+\beta}{2}} \, \mathrm{d} r
 \leq c \|\underbar {\textit{u}} \|_{\gamma,1+\delta}^{(\rho)} \rho^{\frac{1+\delta+\beta}{2}-1}
\end{align*}
having used estimate (\ref{eq 1 stima integrale con fz gamma}) of Lemma \ref{lemma stima integrale con fz gamma} in the last line. Clipping the result together we can state that
\begin{equation*}\label{eq bound for part A (rho)}
 (A)=e^{-\rho t} \|I_t(\underbar {\textit{u}})\vert \tilde H^{1+\delta} (D)\| \leq c( \rho) \|\underbar {\textit{u}} \|_{\gamma,1+\delta}^{(\rho)}
\end{equation*}
 where $c(\rho)= c \rho^{\frac{\delta+\beta-1}{2}} $ and since $\frac{\delta+\beta-1}{2}<0$ we have $c(\rho)\to0$ as $\rho\to\infty$. \\
\emph{Step 2:} Consider part $(B)$. Let for the moment fix our attention only on the argument inside the norm in the numerator of $(B)$. Recall that $0\leq s<t\leq T$. We make a change of variable in the middle integral $r'=r-t+s$ and we obtain
\begin{align*}
  & \int_0^t P_{t-r} m(r) \, \mathrm{d} r -\int_0^s P_{s-r} m(r) \, \mathrm{d} r\\
 =&\int_0^{t-s} P_{t-r} m(r) \, \mathrm{d} r + \int_{t-s}^t P_{t-r} m(r) \, \mathrm{d} r  -\int_0^s P_{s-r} m(r) \, \mathrm{d} r\\
 =&\int_0^{t-s} P_{t-r} m(r) \, \mathrm{d} r + \int_0^{s} P_{s-r} m(r+t-s) \, \mathrm{d} r - \int_0^{s} P_{s-r} m(r) \, \mathrm{d} r \\
 =&\int_0^{t-s} P_{t-r} m(r) \, \mathrm{d} r + \int_0^{s} P_{s-r} (m(r+t-s)-m(r) ) \, \mathrm{d} r
\end{align*}

These computations enable us to write
\begin{align*}
{(B)}=& e^{-\rho t}  \sup_{ 0\leq s<t} \frac{\|I_t(\underbar {\textit{u}})-I_s(\underbar {\textit{u}}) \|_{1+\delta}}{(t-s)^\gamma} \\
 = &e^{-\rho t}\sup_{0\leq s<t} \frac{\| \int_0^t P_{t-r} m(r) \, \mathrm{d} r -\int_0^s P_{s-r} m(r) \, \mathrm{d} r \|_{1+\delta}}{(t-s)^\gamma} \\
 \leq & e^{-\rho t}\sup_{0\leq s<t} \frac{\| \int_0^{t-s} P_{t-r} m(r) \, \mathrm{d} r \|_{1+\delta}}{(t-s)^\gamma} \\
& + e^{-\rho t}\sup_{0\leq s<t} \frac{\| \int_0^s P_{s-r}( m(r+t-s)-m(r)) \, \mathrm{d} r \|_{1+\delta}}{(t-s)^\gamma}
 := (C) + (D).
\end{align*}
\emph{Step 3:} Consider term $(C)$.\\
The numerator is similar to the term $(A)$ and therefore we proceed as we did in Step 1. We have
\begin{align*}
{(C)}= & e^{-\rho t}\sup_{0\leq s<t} \frac{\| \int_0^{t-s} P_{t-r} m(r) \, \mathrm{d} r \|_{1+\delta}}{(t-s)^\gamma} \\
\leq & e^{-\rho t}\sup_{0\leq s<t} \frac{ \int_0^{t-s} c  \| \underbar {\textit{u}}(r)   \|_{1+\delta} (t-r)^{-\frac{1+\delta+\beta}{2}}  \, \mathrm{d} r}{(t-s)^\gamma} \\
\leq & \sup_{0\leq s<t}  \int_0^{t-s}   e^{-\rho (t-r)}c \| \underbar {\textit{u}} \|_{\gamma,1+\delta}^{(\rho)} (t-r)^{-\frac{1+\delta+\beta}{2}}   (t-s)^{-\gamma} \, \mathrm{d} r \\
\leq &  c \| \underbar {\textit{u}} \|_{\gamma,1+\delta}^{(\rho)}  \sup_{0\leq s<t} \int_0^{t-s}   e^{-\rho (t-r)} (t-r)^{-\frac{1+\delta+\beta}{2}} r^{-\gamma} \, \mathrm{d} r\\
= &  c \| \underbar {\textit{u}} \|_{\gamma,1+\delta}^{(\rho)}  \int_0^{t}   e^{-\rho (t-r)} (t-r)^{-\frac{1+\delta+\beta}{2}} r^{-\gamma} \, \mathrm{d} r
\end{align*}
apply estimate (\ref{eq 2 stima integrale con fz gamma}) in Lemma \ref{lemma stima integrale con fz gamma} with $\theta=\frac{1+\delta+\beta}{2}$: since by hypothesis $2\gamma<1-\delta-\beta$ then $\gamma+\theta<1$. We obtain
\begin{align*}
{(C)} \leq    c \| \underbar {\textit{u}} \|_{\gamma,1+\delta}^{(\rho)}   \rho^{\frac{1+\delta+\beta+2\gamma}{2}-1} \leq & c \| \underbar {\textit{u}} \|_{\gamma,1+\delta}^{(\rho)}  \rho^{\frac{\delta+\beta+2\gamma-1}{2}}.
\end{align*}

Clipping the result together
\begin{equation*}\label{eq bound for part C (rho)}
{(C)}= e^{-\rho t} \sup_{0\leq s<t} \frac{\| \int_0^{t-s} P_{t-r} m(r) \, \mathrm{d} r \|_{1+\delta}}{(t-s)^\gamma}  \leq c_1 \|\underbar {\textit{u}} \|_{\gamma,1+\delta}^{(\rho)}  \rho^{\frac{\delta+\beta+2\gamma-1}{2}}.
\end{equation*}
\emph{Step 4:} Consider term $(D)$.\\
First apply Proposition \ref{propr su P_t} to $w=m(r+t-s)-m(r)$ which is an element of  $\tilde H^{-\beta} (D)$ thanks to Property \ref{proposition maggiorazione prodotto puntuale per uZ}. Then apply Proposition \ref{proposition m(r) in spazi di sobolev}, (2).
\begin{align*}
&{(D)}= e^{-\rho t}\sup_{0\leq s<t} \frac{\| \int_0^s P_{s-r}( m(r+t-s)-m(r)) \, \mathrm{d} r  \|_{1+\delta}}{(t-s)^\gamma} \\
\leq & e^{-\rho t}\sup_{0\leq s<t} \frac{ \int_0^s \|  m(r+t-s)-m(r)  \|_{-\beta} (s-r)^{-\frac{1+\delta+\beta}{2}} } {(t-s)^\gamma}  \, \mathrm{d} r\\
\leq &  c e^{-\rho t}\sup_{0\leq s<t} \int_0^s  \frac{ e^{-\rho (r+t-s)}}{ e^{-\rho (r+t-s)}} \frac{ \|  \underbar {\textit{u}}(r+t-s)-\underbar {\textit{u}}(r) \|_{1+\delta} (s-r)^{- \frac{1+\delta+\beta}{2}} } {(t-s)^\gamma} \, \mathrm{d} r \\
\leq &  c \sup_{0\leq s<t} \int_0^s e^{-\rho (s-r)}  e^{-\rho (r+t-s)} \frac{ \|  \underbar {\textit{u}}(r+t-s)-\underbar {\textit{u}}(r) \|_{1+\delta} } {(t-s)^\gamma}  (s-r)^{- \frac{1+\delta+\beta}{2}} \, \mathrm{d} r.
\end{align*}
Fix the attention on the term $ e^{-\rho (r+t-s)} \frac{ \|  \underbar {\textit{u}}(r+t-s)-\underbar {\textit{u}}(r)  \|_{1+\delta}  } {(t-s)^\gamma}$ and set $h=t-s$: we obtain
\begin{equation}\label{eq norma di u riscritta con h=t-s}
  e^{-\rho (r+h)} \frac{ \|  \underbar {\textit{u}}(r+h)-\underbar {\textit{u}}(r) \|_{1+\delta} } {h^\gamma}.
\end{equation}
Moreover observe that
\[
\|\underbar {\textit{u}}\|^{(\rho)}_{\gamma,1+\delta}= \sup_{0\leq t\leq T} e^{-\rho t} \|\underbar {\textit{u}}(t)\|_{1+\delta} +  \sup_{0\leq r<t \leq T}  e^{-\rho t} \frac{\|\underbar {\textit{u}}(t)-\underbar {\textit{u}}(r)\|_{1+\delta}}{(t-r)^\gamma}
\]
and in particular, setting again $t-r=h$, the second summand can be rewritten as
\[
  \sup_{0<h\leq r+h\leq T } e^{-\rho (r+h)} \frac{\|  \underbar {\textit{u}}(r+h)-\underbar {\textit{u}}(r)   \|_{1+\delta}}{h^\gamma}.
\]
Therefore we can bound (\ref{eq norma di u riscritta con h=t-s}) by $\|\underbar {\textit{u}}\|_{\gamma,1+\delta}^{(\rho)}$ (since the parameters $r$ and $h$ are such that $0<h\leq r+h\leq T$) and applying once more estimate (\ref{eq 1 stima integrale con fz gamma}) in Lemma \ref{lemma stima integrale con fz gamma} the upper bound for $(D)$ becomes
\begin{align*}
{(D)}\leq  &   c \|\underbar {\textit{u}}\|_{\gamma,1+\delta}^{(\rho)} \sup_{0\leq s<t} \int_0^s e^{-\rho (s-r)}  (s-r)^{- \frac{1+\delta+\beta}{2}} \, \mathrm{d} r\\
\leq &  c_2 \|\underbar {\textit{u}}\|_{\gamma,1+\delta}^{(\rho)} \rho^{\frac{\delta+\beta-1}{2}} \Gamma\left(\frac{\delta+\beta-1}{2}\right).
\end{align*}
Clipping the result for part $(B)$ we obtain

\begin{equation}\label{eq bound for part B (rho)}
{(B)}=(C)+(D)= e^{-\rho t}  \sup_{ 0\leq s<t} \frac{\|I_t(\underbar {\textit{u}})-I_s(\underbar {\textit{u}}) \|_{1+\delta}}{(t-s)^\gamma} \leq c'( \rho) \|\underbar {\textit{u}} \|_{\gamma,1+\delta}^{(\rho)}
\end{equation}
 where $c'(\rho)= c_1 \rho^{\frac{\delta+\beta+2\gamma-1}{2}}+ c_2  \rho^{\frac{\delta+\beta-1}{2}}$ and since $\frac{\delta+\beta+2\gamma-1}{2}$ and $\frac{\delta+\beta-1}{2}$ are negative we have $c'(\rho)\to0$ as $\rho\to\infty$.

Finally observe that the bound for $(A)+(B)$ does not depend on $t$ and then the supremum over $0\leq t\leq T$ of  $(A)+(B)$ is simply bounded by
\[
 \|I_{(\cdot)}(\underbar {\textit{u}})\|_{\gamma,1+\delta}^{(\rho)}=\sup_{0\leq t\leq T} \Big( (A)+(B) \Big) \leq (c(\rho)+c'(\rho)) \|\underbar {\textit{u}} \|_{\gamma,1+\delta}^{(\rho)}
\]
that is the thesis.
 \end{proof}

\subsection{Theorem of existence and uniqueness}
Now we prove existence and uniqueness of a global mild solution.
\begin{theorem}\label{teo esistenza e unicita globale}
Let $0<\beta<\delta<\frac{1}{2}$ and $ 0<2\gamma< 1-\beta-\delta$. Fix $Z\in H_q^{1-\beta}(\mathbb{R}^d)$ for some $q>\max(2, \frac{d}{\delta})$. Then for any initial condition $u_0\in\tilde H^{1+\delta+2\gamma}(D)$ and for any positive finite time $T$ there exists a unique mild solution $\underbar {\textit{u}}$ in $C^\gamma ([0,T];\tilde H^{1+\delta}(D) )$ for (\ref{SPDE Cauchy prb}) satisfying the integral equation $\underbar {\textit{u}}(t)=P_t u_0+I_t(\underbar {\textit{u}})$.
\end{theorem}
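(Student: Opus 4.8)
The plan is to realize the integral equation $\underbar{\textit{u}}(t)=P_tu_0+I_t(\underbar{\textit{u}})$ as a fixed point problem for the affine map $\Phi$ defined on $E:=C^\gamma([0,T];\tilde H^{1+\delta}(D))$ by $\Phi(\underbar{\textit{u}})(t):=P_tu_0+I_t(\underbar{\textit{u}})$, $t\in[0,T]$, and then to apply the Banach fixed point theorem with respect to one of the equivalent norms $\|\cdot\|_{\gamma,1+\delta}^{(\rho)}$. Recall that $C^\gamma([0,T];X)$ is a Banach space whenever $X$ is. The two points to check are that $\Phi$ maps $E$ into itself and that, for $\rho$ large, $\Phi$ is a strict contraction on $(E,\|\cdot\|_{\gamma,1+\delta}^{(\rho)})$.

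\emph{Self-mapping.} The term $I_{(\cdot)}(\underbar{\textit{u}})$ belongs to $E$ by Theorem \ref{teorema per I in spazi di holder (rho)}, so it remains to show $P_{(\cdot)}u_0\in E$. Since $\delta<\tfrac12$, the fractional power $A^{\frac{1+\delta}{2}}$ is an isomorphism of $\tilde H^{1+\delta}(D)$ onto $L_2(D)$ by (\ref{bound for the isomorphic spaces H}), and it commutes with $P_t$ by Theorem \ref{theor pazy}(b); hence it suffices to prove $P_{(\cdot)}v_0\in C^\gamma([0,T];L_2(D))$ for $v_0:=A^{\frac{1+\delta}{2}}u_0$. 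The uniform bound $\sup_t\|P_tv_0\|_0\le\|v_0\|_0$ is immediate from contractivity of $(P_t)$. For the H\"older seminorm, the hypothesis $u_0\in\tilde H^{1+\delta+2\gamma}(D)$ together with (the extension to the relevant range of) (\ref{bound for the isomorphic spaces H}) gives $v_0\in\tilde H^{2\gamma}(D)=\mathcal D(A^\gamma)$; then, for $0\le s<t\le T$, writing $P_tv_0-P_sv_0=(P_{t-s}-\mathrm{Id})P_sv_0$ and using Theorem \ref{theor pazy}(d) with exponent $\gamma$, the intertwining $A^\gamma P_sv_0=P_sA^\gamma v_0$ and contractivity once more, we obtain
\[
\|P_tv_0-P_sv_0\|_0\le C_\gamma\,(t-s)^\gamma\,\|A^\gamma v_0\|_0 .
\]
Mapping this back through the isomorphism $A^{-\frac{1+\delta}{2}}$ and recalling Corollary \ref{propr per Pt in sobolev frazionario} for the support condition, we conclude $P_{(\cdot)}u_0\in E$ with $\|P_{(\cdot)}u_0\|_{\gamma,1+\delta}\le c\,\|u_0\|_{1+\delta+2\gamma}$, so $\Phi(E)\subset E$.

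\emph{Contraction and conclusion.} Since $I$ is linear in $\underbar{\textit{u}}$, for all $\underbar{\textit{u}},\underbar{\textit{v}}\in E$ one has $\Phi(\underbar{\textit{u}})-\Phi(\underbar{\textit{v}})=I_{(\cdot)}(\underbar{\textit{u}}-\underbar{\textit{v}})$, so estimate (\ref{eq mapping property per I (rho)}) of Theorem \ref{teorema per I in spazi di holder (rho)} gives
\[
\|\Phi(\underbar{\textit{u}})-\Phi(\underbar{\textit{v}})\|_{\gamma,1+\delta}^{(\rho)}\le c(\rho)\,\|\underbar{\textit{u}}-\underbar{\textit{v}}\|_{\gamma,1+\delta}^{(\rho)},
\]
with $c(\rho)\to0$ as $\rho\to\infty$. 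Fixing $\rho$ so large that $c(\rho)<1$ turns $\Phi$ into a strict contraction on $E$, which is complete for the equivalent norm $\|\cdot\|_{\gamma,1+\delta}^{(\rho)}$; the Banach fixed point theorem then yields a unique $\underbar{\textit{u}}\in E$ with $\underbar{\textit{u}}(t)=P_tu_0+I_t(\underbar{\textit{u}})$, which is the sought mild solution. Uniqueness in $E$ is independent of the choice of $\rho$ by equivalence of the norms, and since the constant $c(\rho)$ in Theorem \ref{teorema per I in spazi di holder (rho)} does not depend on $T$, the same $\rho$ works for every finite horizon, so the solution is global.

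\emph{Main obstacle.} I expect the delicate part to be the self-mapping step, specifically the bookkeeping of which fractional powers of $A$ are admissible: extracting a H\"older factor $(t-s)^\gamma$ and smoothing into $\tilde H^{1+\delta}$ together cost $1+\delta+2\gamma$ spatial derivatives, which is exactly the regularity assumed on $u_0$, and one must invoke the extension of (\ref{bound for the isomorphic spaces H}) and of the identification $\mathcal D(A^{\alpha/2})=\tilde H^{\alpha}(D)$ beyond the range stated above (necessary because $1+\delta+2\gamma$ may exceed $\tfrac32$), as noted after Proposition \ref{propr su P_t}. Everything else is a routine application of the contraction principle.
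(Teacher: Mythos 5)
Your proposal is correct and follows essentially the same route as the paper: establish $P_{(\cdot)}u_0\in C^\gamma([0,T];\tilde H^{1+\delta}(D))$ via Theorem \ref{theor pazy}(d) with exponent $\gamma$ (costing exactly $2\gamma$ extra spatial derivatives on $u_0$), combine with Theorem \ref{teorema per I in spazi di holder (rho)} for the self-mapping, and use linearity of $I$ together with $c(\rho)<1$ for $\rho$ large to apply the Banach fixed point theorem in the equivalent $(\rho)$-norm. Your only deviation is cosmetic but welcome: you conjugate by the isomorphism $A^{\frac{1+\delta}{2}}$ so that Theorem \ref{theor pazy}(d) is applied on $L_2(D)$, where it is literally stated, whereas the paper applies it directly in the $\tilde H^{1+\delta}$-norm; both hinge on the same extension of (\ref{bound for the isomorphic spaces H}) beyond the range $\frac{3}{2}$, which you correctly flag.
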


\begin{proof}
From Theorem \ref{teorema per I in spazi di holder (rho)} we know that if $\underbar {\textit{u}}\in C^\gamma([0,T];\tilde H^{1+\delta} (D)) $ then $I_{(\cdot)}(\underbar {\textit{u}}) \in C^\gamma([0,T];\tilde H^{1+\delta} (D)) $.\\
 Now we should ensure that for  $u_0\in\tilde H^{\sigma}(D)$, with $\sigma\geq 1+\delta+2\gamma$ then  $P_{(\cdot)} u_0 \in  C^\gamma([0,T];\tilde H^{1+\delta} (D)) $ too, using the $(\rho)$-norm, namely we should check that
\[
\sup_{0\leq t \leq T} e^{-\rho t}\left( \|P_tu_0\|_{1+\delta} + \sup_{0\leq s<t} \frac{\|P_tu_0-P_su_0\|_{1+\delta} }{(t-s)^\gamma}\right) < \infty.
\]
 Recall that $P_t$ is a bounded linear operator on $\tilde H^\sigma(D)$  for  $-\frac{1}{2}<\sigma$, therefore for every $x\in \tilde H^\sigma(D) $, $\|P_t x \vert  \tilde H^\sigma(D)\|\leq \|P_t\|\cdot \|x \|_{\sigma}<\infty$. Since $u_0\in  \tilde H^{1+\delta+2\gamma}(D) \subset  \tilde H^{1+\delta}(D)$ then
 \[
 \sup_{0\leq t \leq T}  e^{-\rho t} \|P_tu_0\|_{1+\delta} \leq c \sup_{0\leq t \leq T}  \|P_tu_0\|_{1+\delta}  <\infty.
 \]
For the second summand use part (d) of Theorem \ref{theor pazy} and relation (\ref{bound for the isomorphic spaces H}) to obtain
\begin{align*}\label{eq regolarita u0}
 & \|P_tu_0-P_su_0\|_{1+\delta}\nonumber
 = \|P_s(P_{t-s}-I)u_0\|_{1+\delta} \nonumber \\
&\leq  c \|P_s\| \| (P_{t-s}-I) u_0\|_{1+\delta}  \nonumber
\leq  c \|P_s\| (t-s)^\alpha \| A^\alpha u_0\|_{1+\delta} \\
&\leq  c \|P_s\| (t-s)^\alpha \|  u_0\|_{1+\delta +2\alpha }  \nonumber
\leq c M e^{-\omega s} (t-s)^\alpha \|  u_0\|_{1+\delta +2\alpha} \nonumber
\end{align*}
for any $0<\alpha<1$. Therefore the second summand becomes
\begin{align*}
 &\sup_{0\leq t\leq T} e^{-\rho t} \sup_{0\leq s<t} \frac{\|P_tu_0-P_su_0\|_{1+\delta}}{(t-s)^\gamma}
\leq  \sup_{0\leq s< t\leq T} e^{-\rho t} c_s (t-s)^\alpha \frac{ \|  u_0\|_{1+\delta +2\alpha}}{(t-s)^\gamma}
\end{align*}
and if we choose $\alpha=\gamma$ then
\[
\sup_{0\leq t\leq T} e^{-\rho t} \sup_{0\leq s<t} \frac{\|P_tu_0-P_su_0\|_{1+\delta}}{(t-s)^\gamma} \leq \sup_{0\leq s < t\leq T} e^{-\rho t} c_s  \|  u_0\|_{1+\delta +2\gamma}
\]
that is a finite  quantity if $u_0 \in \tilde H^{1+\delta +2\gamma}(D)$.\\
So for any fixed  $u_0 \in \tilde H^{1+\delta+2\gamma}(D)$  the operator $J_{(\cdot)}:=P_{(\cdot)} u_0+I_{(\cdot)}$ is mapping $C^\gamma([0,T];\tilde H^{1+\delta} (D))$ into itself. It is left to prove that $J_{(\cdot)}$ is a contraction, namely that there exists a constant $k<1$ such that for all $\underbar {\textit{u}}, \underbar {\textit{v}} \in C^\gamma([0,T];\tilde H^{1+\delta} (D))$
 \[
  \|J_{(\cdot)}(\underbar {\textit{u}})-J_{(\cdot)}(\underbar {\textit{v}})\|_{\gamma,1+\delta}^{(\rho)}\leq k \|\underbar {\textit{u}} - \underbar {\textit{v}} \|^{(\rho)}_{\gamma,1+\delta}.
 \]
For this aim observe that
\begin{align*}
  \|J_{(\cdot)}(\underbar {\textit{u}})-&J_{(\cdot)}(\underbar {\textit{v}})\|_{\gamma,1+\delta}^{(\rho)}= \|P_tu_0+ I_{(\cdot)}(\underbar {\textit{u}})-P_tu_0-I_{(\cdot)}(\underbar {\textit{v}})\|_{\gamma,1+\delta}^{(\rho)}\\
  &= \left\|\int_0^{\cdot}P_{\cdot-r}  \langle \nabla \underbar {\textit{u}}(r), \nabla Z\rangle \mathrm{d} r- \int_0^{\cdot}P_{\cdot-r}  \langle \nabla \underbar {\textit{v}}(r), \nabla Z\rangle \mathrm{d} r   \right\|_{\gamma,1+\delta}^{(\rho)}\\
  &\leq \left\| \int_0^{\cdot}P_{\cdot-r} \left(\langle \nabla ( \underbar {\textit{u}}(r)-\underbar {\textit{v}}(r) ), \nabla Z\rangle \mathrm{d} r \right)   \right\|_{\gamma,1+\delta}^{(\rho)}\leq  \| I_{(\cdot)}(\underbar {\textit{u}}-\underbar {\textit{v}})\|_{\gamma,1+\delta}^{(\rho)}.
 \end{align*}
We clearly have $\underbar {\textit{w}}:=\underbar {\textit{u}}-\underbar {\textit{v}}\in C^\gamma([0,T];\tilde H^{1+\delta} (D)) $ and then it suffices to apply the result of Theorem \ref{teorema per I in spazi di holder (rho)} with $\underbar {\textit{w}}$ instead of $\underbar {\textit{u}}$ and choose $\rho$ big enough such that the constant $c(\rho)$ appearing in (\ref{eq mapping property per I (rho)}) is less than 1.
\end{proof}


\section{Applications}\label{section applications}
In this section we will  apply the previous results to some stochastic PDEs.
\subsection{The stochastic transport equation}\label{subsection stochastic transport equation}
Consider the stochastic transport equation given by

\begin{equation}\label{stochastic PDE}
\left\{\begin{array}{lcr}
\frac{\partial u}{\partial t}(t,x)= \sigma^2 \Delta u (t,x) + \langle \nabla u (t,x), \nabla Y (x,\omega) \rangle,&\quad& t   \in (0,T], x\in D\\
u(t,x)=0, && t\in (0,T], x\in \partial D\\
u(0,x)=u_0(x), && x\in D
\end{array}\right.
\end{equation}
 where $Y=\{Y(x,\omega)\}_{x\in \mathbb{R}^d}$ is a stochastic field defined on a given probability space $(\Omega, \mathcal F,\mathbb{P})$. One suitable example for the noise $Y$ is the \emph{Levy fractional Brownian motion} $\{B^H(x)\}_{x\in\mathbb{R}^d}$ which is the isotropic generalization of the fractional Brownian motion (see \cite{l1}). This field is defined to be a centered Gaussian field on $\mathbb{R}^d$ of covariance function
\[
 \mathbb{E}[B^H(x)B^H(y)]=\frac{1}{2} (\vert x\vert_d^{2H} +\vert y\vert_d^{2H} -\vert x-y\vert_d^{2H} )
\]
where $\vert\cdot\vert_d$ stands for the Euclidean norm in $\mathbb{R}^d$. The parameter $0<H<1$ is called Hurst parameter. In case when $H=\frac{1}{2}$ we recover the \emph{Levy Brownian motion}, whereas if $d=1$ we get the fractional Brownian motion. Using a Kolmogorov continuity theorem suitable for stochastic fields (see for instance \cite{k2}, Theorem 1.4.1) and basic properties of Gaussian random variables one can show that there exist $\Omega_1\subset \Omega$ with $\mathbb{P}(\Omega_1)=1$ and a modification of $B^H(x), x\in D$ (for simplicity we call it again $B^H(x)$) with $D\subset\mathbb{R}^d$ arbitrary bounded domain of $\mathbb{R}^d$ such that for every $\omega \in \Omega_1$ and for every $x,y\in D$ we have
\[
\vert B^H(x,\omega)-B^H( y,\omega)\vert\leq K_\omega \vert x-y\vert_d^\alpha, \: \forall \alpha<H
\]
where $K$ is a positive random variable with finite moments of every order. \\
In other words, for almost every realization $\omega$ the field is $\alpha$-H\"older continuous on $D$ of any order  $\alpha<H$. This fact together with the following property enable us to apply the results presented in the previous section to equation (\ref{stochastic PDE}) in a pathwise sense.

\begin{proposition}\label{proposition holder continuity -> sobolev regularity}
 Let $h$ be a compactly supported real valued $\alpha$-H\"older continuous function on $\mathbb{R}^d$ for some $0<\alpha<1$. Then for any $\alpha'<\alpha$ we have $h\in H^{\alpha'}_p(\mathbb{R}^d)$ for all $2\leq p<\infty$.
\end{proposition}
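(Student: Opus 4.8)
The plan is to reduce the statement to a local, dyadic computation on the Fourier side. First I would recall that membership in $H^{\alpha'}_p(\mathbb{R}^d)$ can be characterized, via the equivalence of the Bessel-potential norm with the norm given by the Littlewood--Paley decomposition, or more elementarily for $\alpha'\in(0,1)$ and $p\geq 2$ by the Sobolev--Slobodeckij seminorm. So I would first observe that it suffices to prove $h\in W^{\alpha',p}(\mathbb{R}^d)$ in the sense of the Gagliardo seminorm, i.e. to control
\[
\|h\|_{L_p}^p + \int_{\mathbb{R}^d}\int_{\mathbb{R}^d} \frac{|h(x)-h(y)|^p}{|x-y|^{d+\alpha' p}}\,\mathrm{d}x\,\mathrm{d}y,
\]
and then quote the standard embedding $W^{\alpha',p}(\mathbb{R}^d)\hookrightarrow H^{\alpha'}_p(\mathbb{R}^d)$ — or, if one wants to stay strictly within the Bessel-potential framework used in the paper, prove the dyadic bound $\|S^{j}h - S^{j-1}h\,\vert\, L_p\| \leq c\, 2^{-j\alpha}$ directly from H\"older continuity (since the relevant convolution kernel has vanishing integral, one can write $S^j h - S^{j-1}h$ as an average of differences $h(\cdot) - h(\cdot - z)$ against a kernel concentrated at scale $2^{-j}$, giving $2^{-j\alpha}$), and then sum: $\sum_j 2^{j\alpha' p}2^{-j\alpha p} < \infty$ precisely because $\alpha' < \alpha$.

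The $L_p$-part is immediate: $h$ is continuous with compact support, hence bounded and in $L_p$ for every $p$. For the seminorm, the key point is to split the double integral according to whether $|x-y|$ is small or large. On the region $|x-y|\leq 1$ I would use the H\"older bound $|h(x)-h(y)|\leq K|x-y|^\alpha$, so the integrand is dominated by $K^p |x-y|^{\alpha p - d - \alpha' p}$; integrating in $y$ over a ball of radius $1$ around $x$ this is finite exactly when $\alpha p - \alpha' p > 0$, i.e. $\alpha' < \alpha$, and the resulting constant is uniform in $x$. Then integrating in $x$ one must use that the integrand vanishes unless $x$ or $y$ lies in a fixed compact set (because $h$ is supported there), which confines the $x$-integral to a bounded set and yields a finite total. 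On the region $|x-y|> 1$ I would instead use the crude bound $|h(x)-h(y)|\leq 2\|h\|_\infty$, and again the compact support forces at least one of $x,y$ into a fixed compact $K_0$: writing the integral as twice the integral over $\{x\in K_0,\ |x-y|>1\}$ (by symmetry and support), the $y$-integral is $\int_{|x-y|>1}|x-y|^{-d-\alpha' p}\mathrm{d}y < \infty$ since $\alpha' p > 0$, and the $x$-integral is over the bounded set $K_0$.

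The main obstacle — really the only subtle bookkeeping point — is handling the non-compactness of the full $\mathbb{R}^d\times\mathbb{R}^d$ domain of integration in the Gagliardo seminorm: H\"older continuity alone controls the integrand only locally, and one genuinely needs the compact support hypothesis to knock out both the large-$|x-y|$ contribution and the ``$x$ far from everything'' contribution. Everything else is a routine computation with the constant $K$ (which, in the intended stochastic application, is the random variable $K_\omega$, so the resulting bound is pathwise with a random constant). I would close by noting the hypothesis $p\geq 2$ is what makes $W^{\alpha',p}\hookrightarrow H^{\alpha'}_p$ hold (for $p<2$ the inclusion goes the other way), which is exactly the regime needed since the paper later applies this with $p=q>\max(2,d/\delta)$.
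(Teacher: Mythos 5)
Your reduction to the Gagliardo seminorm and the subsequent estimates (splitting $|x-y|\leq 1$ from $|x-y|>1$, using the H\"older bound on the near-diagonal part and compact support plus boundedness on the far part) are fine as a proof that $h\in W^{\alpha',p}(\mathbb{R}^d)=B^{\alpha'}_{p,p}(\mathbb{R}^d)$. The gap is the embedding you then invoke: for $p\geq 2$ the inclusion $W^{\alpha',p}\hookrightarrow H^{\alpha'}_p$ is \emph{false} (except at $p=2$, where the spaces coincide); it goes exactly the other way. Indeed $H^{s}_p=F^{s}_{p,2}$ and $W^{s,p}=B^{s}_{p,p}=F^{s}_{p,p}$, and monotonicity in the microscopic $\ell^q$ index gives $F^{s}_{p,2}\hookrightarrow F^{s}_{p,p}$ when $p\geq 2$, i.e. $H^{s}_p\hookrightarrow W^{s,p}$, with strict inclusion for $p>2$. (Your closing parenthetical also has the $p<2$ case backwards: there $W^{s,p}\hookrightarrow H^{s}_p$ is the inclusion that does hold.) So finiteness of the Gagliardo seminorm at exponent $\alpha'$ does not, for $p>2$, place $h$ in $H^{\alpha'}_p$.

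The argument is easily repaired because you have slack in the smoothness index. Either (i) run your computation at an intermediate exponent $\alpha''\in(\alpha',\alpha)$ to get $h\in B^{\alpha''}_{p,p}$ and then use the embedding with loss of smoothness $B^{\alpha''}_{p,p}\hookrightarrow B^{\alpha'}_{p,2}\hookrightarrow H^{\alpha'}_p$ (the last step valid for $p\geq 2$); or (ii) replace the Gagliardo ($q=p$) seminorm by the $q=2$ Besov norm that the paper itself uses,
\[
\|h\|_{L_p}+\Bigl(\int_{|y|\leq 1}\frac{\|h(\cdot+y)-h(\cdot)\|_{L_p}^2}{|y|^{d+2\alpha'}}\,\mathrm{d}y\Bigr)^{\frac{1}{2}},
\]
for which the pointwise H\"older bound together with the compact support gives $\|h(\cdot+y)-h(\cdot)\|_{L_p}\leq cK|y|^{\alpha}$, hence convergence of the $y$-integral precisely when $\alpha'<\alpha$, and then $B^{\alpha'}_{p,2}\hookrightarrow F^{\alpha'}_{p,2}=H^{\alpha'}_p$ for $p\geq 2$ is the correct embedding. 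Your dyadic alternative is also sound once the bound $\|S^{j}h-S^{j-1}h\mid L_p\|\leq c\,2^{-j\alpha}$ is fed into an $\ell^1$ or $\ell^2$ summation (landing in $B^{\alpha'}_{p,1}$ or $B^{\alpha'}_{p,2}$) rather than the $\ell^p$ summation you wrote, which again only yields $B^{\alpha'}_{p,p}$.
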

The proof makes use of the equivalent norm
\[
\|h\|_{L^p}+\left(\int_{\vert y\vert \leq 1}\frac{\|h(\cdot+y)-h(\cdot)\|_{L^p}^2}{\vert y\vert^{d+2\alpha'}}\mathrm{d} y\right)^{\frac{1}{2}}
\]
 for the Besov spaces $B^{\alpha'}_{p,2}(\mathbb{R}^d)$ and of embedding properties between Besov and Sobolev spaces (see \cite{t2} for more details).

In order to apply this to (almost every) path of $B^H$ we should ensure the compactness of the support. This is not true in general. Instead, since (\ref{stochastic PDE}) is considered only on the  domain $D$,  let $\psi(x), x\in \mathbb{R}^d$ be a $C^\infty$-function  with compact support and such that $\psi(x)=1 \ \forall  x\in \bar D$. Then for almost every $\omega\in \Omega$ the function  $\psi(x) B^H(\omega,x)$ is $\alpha$-H\"older continuous we have that for all $1<q<\infty $ and for all $ \alpha'<\alpha<H$, $\psi(\cdot)B^H(\omega, \cdot)\in H_q^{\alpha'} (\mathbb{R}^d)$. For consistency of notation call $1-\beta:=\alpha'$, and so $1-\beta<H$. In order to match the conditions on the parameter $\beta$ we have to choose $\frac{1}{2}<H<1$. Then for every $\omega\in \Omega_1$ we set $Z(x):=\psi(x)B^H(\omega, x)$ and so Theorem \ref{teo esistenza e unicita globale} ensures existence and uniqueness of a function solution to the stochastic Dirichlet problem (\ref{stochastic PDE}) with $Y=B^H$.

\subsection{A (more) general stochastic transport equation}
We combine in this section the main result obtained in this paper with a result obtained in \cite{h1}. \\
Recall Definition 2.1 in \cite{h1} (we only need the case $k=1$) where the authors define an integral operator of the type $I^\alpha_t(F,\frac{\partial}{\partial t} \nabla V)$ for some  given $F\in \mathbb{R}^d$ and $V=V(t,x_1,\ldots,x_d)$. Their idea is to use Fourier transform to perform the integration with respect to the space variable $x$ and fractional derivatives to give a meaning to the derivative with respect to time and then perform the integration. Moreover they exploited the regularity of this integral, and they proved in Proposition 7.1 that
if $0<\alpha,\beta,\gamma<1$ with $\alpha+\gamma<1$ and $2\gamma+\tilde\delta<2-2\alpha-\beta$ then the integral $I^\alpha_{(\cdot)}(F,\frac{\partial}{\partial t} \nabla V)$ (which in fact does not depend on $\alpha$) belongs to the space $C^\gamma ([0,T];\tilde H^{\tilde \delta}(D) )$ for any given function $V\in C^{1-\alpha}([0,T];H^{1-\beta}(\mathbb{R}^d))$ and vector $F\in  \mathbb{R}^d$.

Taking this into account we are able to give the following  existence and uniqueness result.

\begin{corollary}
Let $T>0$ be fixed, choose $0<\beta<\delta<\frac{1}{2}$ and $ 0<2\gamma< 1-\beta-\delta$. Fix $F\in\mathbb{R}^d$,  $Z\in H_q^{1-\beta}(\mathbb{R}^d)$ and $V\in C^{1-\alpha}([0,T]; H_q^{1-\beta}(\mathbb{R}^d))$ for some $q>\max(2,\frac{d}{\delta})$ and for some $0<\alpha<1$ such that $\alpha+\gamma<1$. Then given any initial condition $u_0\in\tilde H^{1+\delta+2\gamma}(D)$ there exists a unique global mild solution $u(t,x)$ in the  H\"older space $C^\gamma ([0,T];\tilde H^{1+\delta}(D) )$ for the problem
\begin{equation}\label{general PDE}
\left\{\begin{array}{lr}
\frac{\partial u}{\partial t}(t,x)= \sigma^2 \Delta u (t,x) + \langle \nabla u (t,x), \nabla Z(x) \rangle \  & \\
 \phantom{\frac{\partial u}{\partial t}(t,x)= \sigma^2 \Delta u (t,x) } +\langle F ,\frac{\partial}{\partial t} \nabla V(t,x) \rangle,&\   t   \in (0,T], x\in D\\
u(t,x)=0, &  t\in (0,T], x\in \partial D\\
u(0,x)=u_0(x), & x\in D
\end{array}\right.
\end{equation}
and the solution is given by
\[
u(t,\cdot)=P_t u_0+I_t(\underbar {\textit{u}})+I^\alpha_t(F,\frac{\partial}{\partial t} \nabla V).
\]
\end{corollary}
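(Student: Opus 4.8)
The plan is to reproduce the Banach fixed point scheme from the proof of Theorem \ref{teo esistenza e unicita globale}, the only genuinely new feature being the inhomogeneous term $\langle F,\frac{\partial}{\partial t}\nabla V\rangle$, which does not involve the unknown. I would work on the Banach space $C^\gamma([0,T];\tilde H^{1+\delta}(D))$ with an equivalent $(\rho)$-norm and (as in Section \ref{section applications}, absorbing $\sigma^2$ by a harmless time rescaling) introduce the affine operator
\[
\mathcal{J}_t(\underbar {\textit{u}}):=P_tu_0+I_t(\underbar {\textit{u}})+I^\alpha_t\Big(F,\frac{\partial}{\partial t}\nabla V\Big),\qquad t\in[0,T],
\]
where $I$ is the operator (\ref{def integral operator}) and $I^\alpha$ the operator of \cite{h1} (Definition 2.1 there, with $k=1$). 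The first point is that $\underbar {\textit{u}}$ is a mild solution of (\ref{general PDE}) exactly when $\underbar {\textit{u}}=\mathcal{J}(\underbar {\textit{u}})$: the Duhamel contribution of the forcing $\langle F,\frac{\partial}{\partial r}\nabla V(r)\rangle$ is by construction $I^\alpha_t(F,\frac{\partial}{\partial t}\nabla V)$, while that of $\langle\nabla\underbar {\textit{u}}(r),\nabla Z\rangle$ is $I_t(\underbar {\textit{u}})$.

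Then I would check that $\mathcal{J}$ maps $C^\gamma([0,T];\tilde H^{1+\delta}(D))$ into itself, term by term. For $P_{(\cdot)}u_0$ the estimate is word for word the one in the proof of Theorem \ref{teo esistenza e unicita globale}: since $u_0\in\tilde H^{1+\delta+2\gamma}(D)$, Theorem \ref{theor pazy}(d) together with (\ref{bound for the isomorphic spaces H}) (used with exponent $\gamma$) give $P_{(\cdot)}u_0\in C^\gamma([0,T];\tilde H^{1+\delta}(D))$. For $I_{(\cdot)}(\underbar {\textit{u}})$ the membership and the $(\rho)$-bound are precisely Theorem \ref{teorema per I in spazi di holder (rho)}, whose hypotheses $0<\beta<\delta<\frac{1}{2}$, $Z\in H_q^{1-\beta}(\mathbb{R}^d)$, $q>\max(2,\frac{d}{\delta})$, $0<2\gamma<1-\beta-\delta$ are exactly those assumed here. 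For $I^\alpha_{(\cdot)}(F,\frac{\partial}{\partial t}\nabla V)$ I would apply Proposition 7.1 of \cite{h1} with target Sobolev index $\tilde\delta:=1+\delta$, noting that the hypothesis $V\in C^{1-\alpha}([0,T];H_q^{1-\beta}(\mathbb{R}^d))$ gives (possibly after the same kind of cutoff used for $Z$ in Section \ref{section applications}) the $p=2$ regularity required there, and checking the parameter constraints of that proposition, namely $\alpha+\gamma<1$ and $2\gamma+(1+\delta)<2-2\alpha-\beta$; the latter is compatible with the standing assumptions since $1-2\gamma-\beta-\delta>0$ leaves room for $\alpha$ small. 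As this last summand does not depend on $\underbar {\textit{u}}$, summing the three pieces gives $\mathcal{J}(\underbar {\textit{u}})\in C^\gamma([0,T];\tilde H^{1+\delta}(D))$, with finite $(\rho)$-norm.

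The contraction step is then immediate. For $\underbar {\textit{u}},\underbar {\textit{v}}$ in the space, the $P_tu_0$ and $I^\alpha_t$ terms cancel and, by linearity of $I$ in its argument, $\mathcal{J}_t(\underbar {\textit{u}})-\mathcal{J}_t(\underbar {\textit{v}})=I_t(\underbar {\textit{u}}-\underbar {\textit{v}})$, exactly as in Theorem \ref{teo esistenza e unicita globale}; hence (\ref{eq mapping property per I (rho)}) yields $\|\mathcal{J}(\underbar {\textit{u}})-\mathcal{J}(\underbar {\textit{v}})\|^{(\rho)}_{\gamma,1+\delta}\le c(\rho)\,\|\underbar {\textit{u}}-\underbar {\textit{v}}\|^{(\rho)}_{\gamma,1+\delta}$ with $c(\rho)\to0$, and choosing $\rho$ with $c(\rho)<1$ one concludes by Banach's fixed point theorem, the solution being $u(t,\cdot)=P_tu_0+I_t(\underbar {\textit{u}})+I^\alpha_t(F,\frac{\partial}{\partial t}\nabla V)$.

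The main obstacle is not conceptual — the extra term is affine in $\underbar {\textit{u}}$ and so is invisible to the contraction estimate — but purely a matter of exponent bookkeeping: one must apply \cite{h1} with target space \emph{exactly} $\tilde H^{1+\delta}(D)$ (a larger index $\tilde\delta>1+\delta$ would tighten the admissible range of $\alpha$), reconcile the $H_q$-type assumption on $V$ with the $p=2$ setting of \cite{h1}, and verify that the $C^\gamma$-in-time output of \cite{h1} is measured in a norm equivalent to the $(\rho)$-norm used throughout Section \ref{section main result}.
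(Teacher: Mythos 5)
Your proposal is correct and follows essentially the same route as the paper: set $\tilde\delta=1+\delta$, invoke Proposition 7.1 of \cite{h1} (choosing $\alpha$ small enough that $2\gamma+1+\delta<2-2\alpha-\beta$, which $2\gamma<1-\beta-\delta$ permits) to place the forcing term $I^\alpha_{(\cdot)}(F,\frac{\partial}{\partial t}\nabla V)$ in $C^\gamma([0,T];\tilde H^{1+\delta}(D))$, and then run the contraction argument of Theorem \ref{teo esistenza e unicita globale}, to which the $\underbar{\textit{u}}$-independent term contributes nothing. Your extra remarks on the exponent bookkeeping and on reconciling the $H_q$ assumption on $V$ with \cite{h1} only make explicit what the paper leaves implicit.
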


\begin{proof}
Set $\tilde\delta:=1+\delta$. Since $2\gamma<1-\delta-\beta$ then $2\gamma+\tilde \delta<2-\beta$ and if one chooses a positive $\alpha$ such that $2\gamma+\tilde \delta<2-\beta-2\alpha$ then the condition $\alpha+\gamma<1$ is satisfied and by Proposition 7.1 in \cite{h1} we have $I^\alpha_{(\cdot)}(F,\frac{\partial}{\partial t} \nabla V) \in C^\gamma ([0,T];\tilde H^{\tilde \delta}(D) )$. Finally apply a contraction principle  as applied in the proof of Theorem \ref{teo esistenza e unicita globale} and recover the thesis.
\end{proof}
With the same technique illustrated in Section \ref{subsection stochastic transport equation} one can solve (\ref{general PDE}) in the case when  $Z$ and $ V$ are substituted by stochastic fields, and then the system is solved in the pathwise sense. See \cite{h1}, Section 6 for a survey on possible noises in place of $V$.


\section*{Acknowledgements}
Work supported in part by the European Community's FP 7 Programme under contract PITN-GA-2008-213841, Marie Curie ITN ``Controlled Systems''.

\end{document}